\numberwithin{equation}{section}
\newtheorem{theorem}{Theorem}[section]
\newtheorem{lemma}[theorem]{Lemma}
\newtheorem{problem}[theorem]{Problem}
\newtheorem{corollary}[theorem]{Corollary}
\theoremstyle{definition}
\newtheorem{definition}[theorem]{Definition}
\theoremstyle{remark}
\newcommand{\mc}[1]{\mathcal{#1}}
\newcommand{\mbb}[1]{\mathbb{#1}}
\newcommand{\setm}{\setminus}
\newcommand{\empt}{\emptyset}
\newcommand{\subs}{\subset}
\newcommand{\dom}{\operatorname{dom}}
\newcommand{\Fn}{\operatorname{Fn}}
\def\<{\left\langle}
\def\>{\right\rangle}
\def\br#1;#2;{\bigl[ {#1} \bigr]^ {#2} }
\newcommand{\oo}{{\omega}_1}
\DeclareMathOperator{\cf}{cf}
\DeclareMathOperator{\inte}{int}
\newcommand{\wreso}[1]{monotonically $#1$-resolvable}
\newcommand{\wresob}[1]{monotonically $#1$-resolvability}
\newcommand{\clo}[1]{\overline{\overline {#1}}}
\author[L. Soukup]{Lajos Soukup}
\address{Alfréd Rényi Institute of Mathematics, Hungarian Academy of Sciences}
\email{soukup@renyi.hu
}
\thanks
   {
   }
\author[A. Stanley]{Adrienne Stanley}
\address{Department of Mathematics, University of Northern Iowa}
\email{adrienne.stanley@uni.edu}
\thanks{The second author was 
supported by  Fulbright Scholar Program.}
\subjclass[2010]{54A35, 03E35, 54A25}
\keywords{resolvable, monotonically ${\omega}_1$-resolvable, measurable cardinal}
\title[Resolvability in c.c.c. generic extensions]
   {Resolvability in c.c.c. generic extensions}
\date{\today}
\thanks{The preparation of this paper was supported by  OTKA grant no. K113047.}
\begin{document}
\begin{abstract}
Every crowded space $X$ is ${\omega}$-resolvable in the c.c.c generic extension 
$V^{\Fn(|X|,2})$
of the ground model.

We investigate 
what we can say about ${\lambda}$-resolvability in  c.c.c-generic extensions for 
${\lambda}>{\omega}$?

 A topological space is {\em \wreso{\oo}}
if there is a function $f:X\to \oo$ such that 
\begin{displaymath}
 \{x\in X: f(x)\ge {\alpha} \}\subs^{dense}X
\end{displaymath}
for each ${\alpha}<\oo$. 

We show that given a $T_1$ space $X$ the following statements are equivalent:
\begin{enumerate}[(1)]
 \item $X$ is  ${\omega}_1$-resolvable in some c.c.c-generic extension,
 \item $X$ is  \wreso{\oo}.
\item $X$ is  ${\omega}_1$-resolvable 
 in the Cohen-generic  extension $V^{\Fn(\oo,2)}$.
 \end{enumerate}
 
We investigate which spaces are \wreso{\oo}. 
We show that  if a topological space $X$ is c.c.c, 
and ${\omega}_1\le \Delta(X)\le |X|<{\omega}_{\omega}$,
where $\Delta(X) = \min\{ |G| : G \ne
\emptyset \mbox{ open}\}$, then $X$ is  
\wreso{\oo}.

On the other hand, 
it is also consistent, modulo the existence of a measurable cardinal,
that there is a space $Y$ with $|Y|=\Delta(Y)=\aleph_\omega$ which is not 
\wreso{\oo}.

The characterization of $\oo$-resolvability in c.c.c generic extension 
raises the following question: is it true that  crowded spaces
from the ground model are  
${\omega}$-resolvable in $V^{\Fn({\omega},2)}$? 

We show that (i) if $V=L$ then every 
crowded c.c.c. space $X$ is  ${\omega}$-resolvable in $V^{\Fn({\omega},2)}$,
(ii) if there is no weakly inaccessible cardinals, then 
every 
crowded space $X$ is  ${\omega}$-resolvable in $V^{\Fn({\omega}_1,2)}$.

On the other hand, 
it is also consistent, modulo a measurable cardinals, that there is a 
crowded space $X$  with $|X|=\Delta(X)=\oo$  such that
$X$ remains irresolvable after adding a single Cohen real.
\end{abstract}

\maketitle

\section{Introduction}

Notion of resolvability  was introduced and studied first 
 by E. Hewitt, \cite{He}, in 1943.
A topological space $X$ is {\em ${\kappa}$-resolvable} if it can be partitioned into ${\kappa}$
many dense subspaces.   $X$ is {\em resolvable} iff it is 2-resolvable, and
{\em irresolvable} otherwise.
Irresolvable spaces with many interesting 
extra properties were constructed, but   
there are no ``absolute'' examples for crowded irresolvable spaces, because 
if $X$ is a crowded space, 
then clearly
\begin{displaymath}
V^{\Fn(|X|,2)}\models \text{$X$ is ${\omega}$-resolvable.}
\end{displaymath}

In this paper we investigate 
what we can say about ${\lambda}$-resolvability in  c.c.c-generic extensions for 
${\lambda}>{\omega}$? 

\renewcommand{\thefootnote}{\fnsymbol{footnote}}

To characterize spaces which are 
${\omega}_1$-resolvable in some c.c.c-generic extension 
we introduce the notion of \wreso \kappa.
\begin{definition}
Let ${\kappa}$ be an infinite cardinal. A topological space $X$ 
is {\em \wreso {\kappa}}\footnote[2]{In \cite{TaVi} 
a ``\wreso{\omega}'' space is called ``almost-${\omega}$-resolvable''.
However, in \cite{Recent} a space $X$ is {\em almost-${\kappa}$-resolvable}
if it contains a family of ${\kappa}$ dense sets with pairwise nowhere dense intersections. 
 } if there is a function $f:X\to {\kappa}$ such that 
\begin{displaymath}
 \{x\in X: f(x)\ge {\alpha} \}\subs^{dense}X
\end{displaymath}
for each ${\alpha}<{\kappa}$. We will say that 
 $f$ {\em witnesses } that $X$ is \wreso{{\kappa}}.

\end{definition}
Clearly a space 
$X$ is \wreso{{\kappa}} iff $X$ has a partition  $\{X_{\zeta}:{\zeta}<{\kappa}\}$
of $X$ such that 
\begin{displaymath}
\inte \big(\bigcup \{X_{\zeta}:{\zeta}<{\xi}\}\big)=\empt                  
\end{displaymath}
for all ${\xi}<{\kappa}$.

\begin{theorem}\label{tm:w1-res-in-ccc}
Let $X$ be a $T_1$ topological space. The following statements are equivalent:
\begin{enumerate}[(1)]
 \item $X$ is  ${\omega}_1$-resolvable in some c.c.c-generic extension,
 \item $X$ is  \wreso{\oo},
 \item $X$ is  ${\omega}_1$-resolvable 
 in the Cohen generic extension $V^{\Fn({\omega}_1,2)}$.
\end{enumerate}
\end{theorem}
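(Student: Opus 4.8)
The plan is to run the cycle $(3)\Rightarrow(1)\Rightarrow(2)\Rightarrow(3)$.

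\smallskip
\noindent\emph{$(3)\Rightarrow(1)$.} This is immediate: $\Fn(\oo,2)$ is c.c.c.\ by the usual $\Delta$-system argument on the finite working parts, so any model obtained from it is a c.c.c.-generic extension.

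\smallskip
\noindent\emph{$(1)\Rightarrow(2)$.} Suppose $X$ is $\oo$-resolvable in $V^P$ with $P$ c.c.c., and let $\dot g$ name a function $X\to\oo$ each of whose fibres is forced dense; then $\{\dot g\ge\alpha\}\supseteq\dot g^{-1}(\alpha)$ is forced dense for each $\alpha<\oo$. Fix in $V$ a $\pi$-base $\mc P$ of $X$; since a subset of $X$ in any extension is dense iff it meets every member of $\mc P$, density is absolute. Put, in $V$,
\begin{displaymath}
A_\alpha=\{x\in X:(\exists p\in P)\ p\Vdash \dot g(\check x)\ge\check\alpha\}\qquad(\alpha<\oo).
\end{displaymath}
These are decreasing in $\alpha$, and each $A_\alpha$ is dense: for $B\in\mc P$ we have $\mathbb 1\Vdash\{\dot g\ge\alpha\}\cap B\ne\emptyset$, so some $p$ forces some $x\in B$ to satisfy $\dot g(\check x)\ge\alpha$, whence $x\in A_\alpha\cap B$. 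The same argument applied to a limit level $\lambda<\oo$ shows $\bigcap_{\alpha<\lambda}A_\alpha$ is dense (a condition forcing $\dot g(\check x)\ge\lambda$ forces $\dot g(\check x)\ge\alpha$ for all $\alpha<\lambda$ simultaneously). Moreover $\bigcap_{\alpha<\oo}A_\alpha=\emptyset$: by c.c.c.\ the name $\dot g(\check x)$ has only countably many possible values — incompatible conditions force distinct ones — hence a bound $\alpha_x<\oo$, so $x\notin A_\alpha$ for $\alpha\ge\alpha_x$. Now redefining $A_\lambda:=\bigcap_{\alpha<\lambda}A_\alpha$ at limits turns $\langle A_\alpha:\alpha<\oo\rangle$ into a decreasing \emph{continuous} sequence of dense sets with $A_0=X$ and empty intersection; since each $\{\alpha:x\in A_\alpha\}$ is then a successor ordinal $\delta_x+1$, setting $f(x)=\delta_x$ gives $\{x:f(x)\ge\alpha\}=A_\alpha$, i.e.\ a witness of \wresob{\oo} in $V$.

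\smallskip
\noindent\emph{$(2)\Rightarrow(3)$.} Fix a witness $f:X\to\oo$, so $E_\alpha:=\{x:f(x)\ge\alpha\}$ is dense for each $\alpha$; note this already forces $\oo\le\Delta(X)$ (each nonempty basic open set, meeting every $E_\alpha$, must have size $\ge\oo$). If $X$ has a $\pi$-base of size $\le\oo$, then $\pi w(X)\le\oo\le\Delta(X)$ and a transfinite recursion of length $\oo$ along $(\pi\text{-base})\times\oo$ — choosing at each step a point of the relevant $\pi$-base element not used so far, possible since those sets have size $\ge\oo$ — already produces an $\oo$-resolution of $X$ in $V$, hence in $V^{\Fn(\oo,2)}$. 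Otherwise (and this is the heart of the theorem) one passes to $V^{\Fn(\oo,2)}$, which adds $\oo$ mutually Cohen-generic reals, and constructs an $\oo$-resolution $\{R_\gamma:\gamma<\oo\}$ by recursion: maintaining the invariant that the remainder $X_\gamma:=X\setminus\bigcup_{\gamma'<\gamma}R_{\gamma'}$ is dense and $f\restriction X_\gamma$ still witnesses \wresob{\oo} of $X_\gamma$, one uses the $\gamma$-th generic real together with $f$ to peel a dense set $R_\gamma\subseteq X_\gamma$ off $X_\gamma$ while preserving the invariant (the $T_1$ assumption is used in this peeling step). I expect the main obstacle to lie in the limit stages of this recursion: the finite-support structure of $\Fn(\oo,2)$ must be exploited to ensure that $X_\gamma$ remains dense — and remains \wreso{\oo} — at every countable limit $\gamma$, rather than being exhausted after $\omega$ steps. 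This is precisely where one needs $\oo$ Cohen-generic reals rather than a single one (consistently with the example, announced in the abstract, of a crowded space that stays irresolvable after adding one Cohen real).
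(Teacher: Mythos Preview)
Your $(3)\Rightarrow(1)$ is fine, and your $(1)\Rightarrow(2)$ is correct and in fact slightly more direct than the paper's route (the paper phrases the intermediate condition as the existence of a map $h:X\to[\omega_1]^{\omega}$ with $\bigcup h''U=\omega_1$ for every nonempty open $U$, and only implicitly identifies this with monotone $\omega_1$-resolvability).

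The genuine gap is $(2)\Rightarrow(3)$. You do not carry out the peeling recursion, and as described it cannot even get started: step $0$ asks that a \emph{single} Cohen real, together with the witness $f$, produce a dense $R_0\subseteq X$ with dense complement. But Theorem~\ref{tm:w-not-enough}(1) of this very paper exhibits (consistently) a space with $|X|=\Delta(X)=\omega_1$ --- hence monotonically $\omega_1$-resolvable --- which remains irresolvable in $V^{\Fn(\omega,2)}$. So one Cohen real is not enough to perform even the first peel; the obstruction is not only at limit stages. All $\omega_1$ Cohen reals must be used in a coordinated way from the outset, and your recursion does not say how.

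The paper's argument for $(2)\Rightarrow(3)$ uses no recursion and no case split on $\pi$-weight. From a witness $f$ one passes to a map $h^*:X\to[\omega_1]^{\omega}$ (morally $h^*(x)=f(x)+\omega$, then adjusted to land in a fixed cofinal family $\mathcal A\subseteq[\omega_1]^{\omega}$ of size $\omega_1$) such that $\{h^*(x):x\in U\}$ is cofinal in $\bigl([\omega_1]^{\omega},\subseteq\bigr)$ for every nonempty open $U$. One then observes that $\Fn(\omega_1,2)\cong\Fn(\omega_1,\omega)$ is isomorphic to
\[
\mathbb P=\bigl\{\,p\in\Fn(\mathcal A,\omega_1):\ p(A)\in A\ \text{for all }A\in\dom p\,\bigr\},
\]
whose generic is a choice function $g:\mathcal A\to\omega_1$ with $g(A)\in A$. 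Put $F=g\circ h^*$. Given $\alpha<\omega_1$, a nonempty open $U$, and $p\in\mathbb P$, cofinality yields $x\in U$ with $h^*(x)\supsetneq\{\alpha\}\cup\bigcup\dom p$; then $h^*(x)\notin\dom p$ and $\alpha\in h^*(x)$, so $q=p\cup\{\langle h^*(x),\alpha\rangle\}$ forces $F(\check x)=\check\alpha$. By genericity every fibre $F^{-1}\{\alpha\}$ is dense, and $X$ is $\omega_1$-resolvable in $V^{\Fn(\omega_1,2)}$.
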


Which spaces are \wreso{\oo}?

\begin{theorem}\label{tm:ccc-spaces-below-ww}
If a topological space $X$ is c.c.c, 
and ${\omega}_1\le \Delta(X)\le |X|<{\omega}_{\omega}$, then $X$ is  
\wreso{\oo}.
\end{theorem}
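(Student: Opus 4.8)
The plan is to induct on the natural number $n$ with $|X|=\aleph_n$. Since $\omega_1\le\Delta(X)\le|X|$ forces $|X|\ge\aleph_1$, the base case is $n=1$, where c.c.c. is not even needed: enumerate $X=\{x_\beta:\beta<\omega_1\}$ and set $f(x_\beta)=\beta$. Because $\Delta(X)\ge\omega_1$ and $\omega_1$ is regular, deleting fewer than $\omega_1$ points from a nonempty open set leaves it nonempty, so each $\{x_\beta:\beta\ge\alpha\}$ is dense and $f$ witnesses \wreso{\oo}.

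For the step $|X|=\aleph_{n+1}$, $n\ge 1$, I would first collect three soft reductions. (i) If $Y\subseteq X$ is dense and \wreso{\oo} then so is $X$ (extend the witness arbitrarily on $X\setm Y$); a dense subspace of a c.c.c. space is c.c.c.; and an open disjoint sum of \wreso{\oo} spaces is \wreso{\oo}. (ii) If $\pi(X)<|X|$, fix a $\pi$-base $\{P_\zeta:\zeta<\pi(X)\}$, choose $S_\zeta\subseteq P_\zeta$ with $|S_\zeta|=\omega_1$, and put $D=\bigcup_\zeta S_\zeta$; then $D$ is a dense c.c.c. subspace with $\Delta(D)\ge\omega_1$ and $|D|<|X|$, so the inductive hypothesis gives \wreso{\oo} for $D$, hence for $X$ by (i). (iii) Let $G=\bigcup\{U\text{ open}:|U|<|X|\}$ and $H=\inte(X\setm G)$. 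Then $G\cup H$ is dense open, $G$ is the union of its open subsets of size $\le\aleph_n$, and either $H=\empt$ or $\Delta(H)=|H|=\aleph_{n+1}$; a maximal (hence, by c.c.c., countable) disjoint family of open subsets of $G$ of size $\le\aleph_n$ has dense union with members falling under the inductive hypothesis, so $G$ is \wreso{\oo} and by (i) it suffices to treat $H$. Combining (ii) and (iii), we may assume $\Delta(X)=|X|=\kappa$ with $\omega_1<\kappa=\aleph_{n+1}<\omega_\omega$ and $\pi(X)\ge\kappa$.

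In this remaining case the goal is to produce $\omega_1$ pairwise disjoint dense subsets of $X$. If $\pi(X)=\kappa$ this is a bare transfinite recursion of length $\kappa$: list a $\pi$-base $\{B_\xi:\xi<\kappa\}$ and all pairs $(\alpha,\xi)\in\omega_1\times\kappa$ in order type $\kappa$; since $\kappa$ is regular, at stage $(\alpha,\xi)$ fewer than $\kappa=\Delta(X)$ points have been coloured, so a not-yet-coloured point of $B_\xi$ can receive colour $\alpha$, and then every colour class meets every $B_\xi$ and so is dense. The genuine difficulty is the case $\pi(X)>|X|$, which does occur (for instance a suitably spread dense subspace of size $\kappa$ inside the Stone space of the Cohen algebra on $\kappa^+$ has $\pi$-weight $\kappa^+$): now the requirements cannot be serviced one at a time. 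My plan here is to stratify $X$ by an increasing continuous chain $\langle M_\eta:\eta<\kappa\rangle$ of elementary submodels with $X\in M_0$ and $|M_\eta|\le\aleph_n$, so that $X=\bigcup_\eta(M_\eta\cap X)$ with each $M_\eta\cap X$ of empty interior, apply the inductive hypothesis to the small pieces this produces, and then amalgamate the resulting $\kappa$-many partial colourings into one colouring by $\omega_1$.

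The point of friction — both in guaranteeing that the pieces of the stratification are usable and in carrying out the amalgamation — is that a failure would manufacture an $\omega_1$-complete uniform ultrafilter on a cardinal $\le|X|<\omega_\omega$, and no such ultrafilter exists; this is exactly where the bound $|X|<\omega_\omega$ enters, mirroring the role of the measurable cardinal in the consistent $\aleph_\omega$ counterexample. I expect this last amalgamation/ultrafilter step to be the main obstacle.
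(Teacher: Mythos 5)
There is a genuine gap. Your reductions (i)--(iii), the base case $|X|=\aleph_1$, and the case $\pi(X)\le\kappa=\Delta(X)=|X|$ (the straightforward recursion of length $\kappa$) are all fine, but the case you yourself flag as the ``main obstacle'' is exactly the content of the theorem, and your plan for it is not a proof. You propose to stratify $X$ by a chain of elementary submodels of size $\aleph_n$, apply the inductive hypothesis to the pieces, and ``amalgamate'' $\kappa$ partial colourings into one colouring by ${\omega}_1$, asserting that a failure of this amalgamation would produce an ${\omega}_1$-complete uniform ultrafilter on a cardinal $\le|X|<{\omega}_{\omega}$. No argument is given that a failure yields such an ultrafilter, and this is not a routine claim: the consistent counterexample at $\aleph_{\omega}$ (Theorem~\ref{tm:example-of-size-ww}) is driven by an ${\omega}$-descendingly complete uniform ultrafilter on $\aleph_{\omega}$, a much weaker object than a measurable below the cardinal in question, so the obstruction you would have to extract and refute is of that descending-completeness type, not a countably complete ultrafilter that ZFC excludes outright. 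As it stands, the inductive step is missing precisely where the c.c.c.\ hypothesis has to do combinatorial work; note also that your submodel pieces $M_\eta\cap X$ need not be open or even have nonempty interior traces in a way that lets the inductive hypothesis produce dense-in-$X$ tails, so even the ``usability of the pieces'' is unjustified.

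The paper closes this gap by a different device: a stepping-down theorem (Theorem~\ref{tm:stepping-down}) proved with an Ulam matrix on ${\kappa}^+$, in the spirit of Kunen--Prikry: if $X$ is ${\kappa}$-c.c.\ and monotonically ${\kappa}^+$-resolvable, then $X$ is monotonically ${\kappa}$-resolvable. With this in hand no $\pi$-weight analysis or elementary-submodel amalgamation is needed at all: after the (same kind of) reduction to $|Y|=\Delta(Y)=\aleph_n$ via Lemma~\ref{lm:open-subspace}, the enumeration of $Y$ trivially witnesses monotone $\aleph_n$-resolvability, c.c.c.\ gives $\aleph_k$-c.c.\ for every $k\ge 1$, and finitely many applications of the stepping-down theorem descend from $\aleph_n$ to ${\omega}_1$. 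If you want to salvage your outline, the ultrafilter heuristic should be replaced by (or made precise as) this Ulam-matrix argument; that is where the hypothesis $|X|<{\omega}_{\omega}$ (finitely many successor steps) is actually consumed.
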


\begin{theorem}\label{tm:example-of-size-measurable}
If ${\kappa}$ is a measurable cardinal, then 
 there is a space 
$X$ with $|X|=\Delta(X)={\kappa}$
which is not \wreso{{{\omega}_1}}.
\end{theorem}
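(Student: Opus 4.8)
The plan is to realise $X$ as the \emph{ultrafilter space} attached to a $\kappa$-complete nonprincipal ultrafilter on $\kappa$. Since $\kappa$ is measurable, fix such an ultrafilter $U$, and topologise $X=\kappa$ by declaring $O\subseteq\kappa$ to be open iff $O=\emptyset$ or $O\in U$. This is indeed a topology: $U$, being a filter, is closed under finite intersections and under supersets, so $\{\emptyset\}\cup U$ is closed under finite intersections and arbitrary unions, and it contains $\emptyset$ and $\kappa$.

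Next I would verify $|X|=\Delta(X)=\kappa$. The equality $|X|=\kappa$ is clear, and every nonempty open set has cardinality $\kappa$: if $A\in U$ had $|A|<\kappa$, then partitioning $A$ into its singletons and applying the $\kappa$-completeness of $U$ would put some $\{\xi\}\in U$, contradicting that $U$ is nonprincipal. Hence $\Delta(X)=\kappa$.

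The crux is the identification of the dense subsets of $X$: since $U$ is an ultrafilter (not merely a filter), a set $D\subseteq X$ is dense in $X$ iff $D\in U$. Indeed, if $D\in U$ then $D$ meets every member of $U$, because $U$ is a filter, hence meets every nonempty open set; conversely, if $D\notin U$ then $\kappa\setminus D\in U$ is a nonempty open set disjoint from $D$. Now suppose, toward a contradiction, that some $f:X\to\omega_1$ witnesses that $X$ is \wreso{\oo}. For each $\alpha<\omega_1$ the set $E_\alpha=\{x\in X:f(x)\ge\alpha\}$ is dense, so $E_\alpha\in U$. These sets form a decreasing $\omega_1$-chain whose intersection is $\{x\in X:f(x)\ge\omega_1\}=\emptyset$, because $\ran f\subseteq\omega_1$. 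But $\kappa$ is measurable, hence inaccessible, so $\omega_1<\kappa$, and the $\kappa$-completeness of $U$ then gives $\emptyset=\bigcap_{\alpha<\omega_1}E_\alpha\in U$, a contradiction. Therefore $X$ is not \wreso{\oo}.

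I do not expect a genuine obstacle here: the construction is essentially forced once one notices that the ultrafilter topology makes density coincide with membership in $U$. The only point that needs a little care is the bookkeeping of completeness — $\sigma$-completeness of $U$ is already what defeats the $\omega_1$-indexed decreasing chain $(E_\alpha)_{\alpha<\omega_1}$, while the full $\kappa$-completeness is what is needed to force $\Delta(X)=\kappa$ rather than merely $\Delta(X)\ge\omega_1$.
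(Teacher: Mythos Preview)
Your argument is correct, and it is considerably more elementary than the paper's own proof. The paper does not use the ultrafilter topology on $\kappa$ itself; instead it takes the tree space $X(\mathcal U)$ on $\kappa^{<\omega}$ from \cite{JuSoSz}, invokes a lemma (quoted from that source) to the effect that for an $\omega$-descendingly complete ultrafilter the family $\{\overline D:D\in\mathcal D\}$ is point-countable whenever $\mathcal D$ is, and then appeals to Lemma~\ref{lm:clo-point-countable} to rule out \wresob{\oo}. That detour is worthwhile for the paper because $X(\mathcal U)$ is monotonically normal (hence $T_3$), so one gets a much better separation axiom than your example, whose nonempty open sets are pairwise intersecting and which is therefore not even Hausdorff. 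For the bare statement of the theorem as written, however, your direct route through ``dense $=$ member of $U$'' is cleaner and avoids the external citations entirely.

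One small correction to your closing paragraph: it is not $\sigma$-completeness that kills the chain $(E_\alpha)_{\alpha<\omega_1}$. That chain has length $\omega_1$, so intersecting it requires closure under $\omega_1$-sized intersections, i.e.\ $\omega_2$-completeness, which you have because $\omega_1<\kappa$ and $U$ is $\kappa$-complete. Your main argument already says this correctly; only the informal remark at the end misattributes the work. (Mere $\sigma$-completeness would suffice to block \wresob{\omega}, not \wresob{\oo}.)
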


What about spaces of  cardinality  ${\omega}_{\omega}$?

\begin{theorem}\label{tm:example-of-size-ww}
It is consistent, modulo the existence of a measurable cardinals, 
  that there is a 
space $X$ with $|X|=\Delta(X)={\omega}_{\omega}$
which is not \wreso{{{\omega}_1}}.
\end{theorem}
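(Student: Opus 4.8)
The plan is to produce such an $X$ in a forcing extension in which a measurable cardinal has been turned into $\omega_\omega$. Fix a measurable $\kappa$ with a normal measure $U$, and, harmlessly, assume $\mathrm{GCH}$. Force with a Magidor-style Prikry-type forcing $\mathbb{P}$ at $\kappa$ with interleaved L\'evy collapses: a condition consists of a finite stem $\langle\langle\kappa_0,p_0\rangle,\dots,\langle\kappa_{k-1},p_{k-1}\rangle\rangle$ (the $\kappa_i$ increasing, $p_i$ a collapse condition on the interval below $\kappa_i$) together with a pair $\langle A,H\rangle$ where $A\in U$ and $H$ assigns, on a $U$-large set of ordinals $\nu$, a collapse condition on the interval below $\nu$. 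The forcing is arranged so that in $V[G]$: $\kappa$ is still a cardinal and $\kappa=\omega_\omega$, the generic Prikry sequence $\langle\kappa_n:n<\omega\rangle$ is cofinal in $\kappa$, $\omega_1$ remains a cardinal, the cardinals $\ge\kappa^{+}$ of $V$ survive, and $\mathbb{P}$ satisfies the Prikry property.

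In $V[G]$ let $X$ be the space underlying Theorem~\ref{tm:example-of-size-measurable} for $\kappa,U$, re-read in $V[G]$: $X$ is the set $\kappa$ with a nonempty set declared open iff it contains some $B\in U$, i.e.\ with topology $\tau=\{\emptyset\}\cup\{A\subseteq\kappa:\exists B\in U\ (B\subseteq A)\}$ generated by the ground-model measure. Since $U\in V$ is a filter, $\tau$ is a topology; clearly $|X|=\kappa=\omega_\omega$; and every nonempty open set contains a member $B$ of $U$, which in $V$ is cofinal in $\kappa$ with $|B|=\kappa$, whence $|B|^{V[G]}=\omega_\omega$ (the $V[G]$-cardinalities of the Prikry blocks are cofinal in $\omega_\omega$, and $\kappa$ remains a cardinal). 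Thus $\Delta(X)=\omega_\omega=|X|$, and the whole theorem reduces to the preservation statement: \emph{$X$ is not \wreso{\omega_1} in $V[G]$.}

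Recall that $X$ is \wreso{\omega_1} iff there is $f:X\to\omega_1$ with $\{x:f(x)\ge\alpha\}$ dense for every $\alpha<\omega_1$ (equivalently, a $\subseteq$-decreasing sequence of dense sets $D_\alpha=\{f\ge\alpha\}$ with $\bigcap_\alpha D_\alpha=\emptyset$). In the space $X$, "dense" means "meets every $B\in U$", so it suffices to rule out any such $f$ in $V[G]$. The strategy is to reflect a putative witness $f:\kappa\to\omega_1$ to the ground model. Using the Prikry property of $\mathbb{P}$ together with the closure of the collapse tails, one commits the collapse conditions in the $H$-component of a condition so as to make the values of $f$ ground-model-coded: there is a ground-model $F$ with $f(\alpha)=F(\alpha,\{\kappa_i:\kappa_i<\alpha\})$ for all $\alpha<\kappa$, below such a condition. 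For each fixed finite $t\subseteq\kappa$ the map $\alpha\mapsto F(\alpha,t)$ lies in $V$, so by $\kappa$-completeness of $U$ it is constant, with some value $\gamma_t<\omega_1$, on a set $B_t\in U$; the diagonal intersection $B^{*}=\{\alpha<\kappa:\alpha\in B_t\text{ for all }t\in[\alpha]^{<\omega}\}$ is in $U$ because $U$ is normal. For $\alpha\in B^{*}$ we get $f(\alpha)=\gamma_{\{\kappa_i:\kappa_i<\alpha\}}$, and $\{\kappa_i:\kappa_i<\alpha\}$ runs only through the countably many initial segments of $\langle\kappa_n\rangle$, so $f\restriction B^{*}<\delta$ for some $\delta<\omega_1$. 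Then $D_\delta=\{f\ge\delta\}$ misses the nonempty open set $B^{*}$, hence is not dense and $f$ is not a witness. As $f$ was arbitrary, $X$ is not \wreso{\omega_1} in $V[G]$.

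\textbf{Main obstacle.} The heart of the argument is the reflection step in the last paragraph. For \emph{pure} Prikry forcing it is just the standard canonical-function fact, but here one must verify that the values of a function $\kappa\to\omega_1$ in $V[G]$ are controlled by finitely much of the Prikry sequence rather than by the interleaved L\'evy-collapse generics (including the dependence on which ordinal becomes the next Prikry point). The natural route is a Mathias-style argument run through the factorization $\mathbb{P}\cong(\text{collapse below the stem})\times(\text{Prikry--collapse above the stem})$, using that the collapse blocks of the upper factor are $\kappa_n^{+}$-closed, so add no short sequences of ordinals $<\omega_1$, while the upper factor retains the Prikry property; committing the collapse data in $H$ so that the remaining dependence becomes ground-model-coded is the technical crux, and it is exactly there that the $\kappa$-completeness and normality of $U$ --- i.e.\ the measurability of $\kappa$ --- are indispensable.
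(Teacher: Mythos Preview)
Your approach diverges substantially from the paper's, and the central reflection step does not go through as you describe.

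The paper does not build the model by Prikry-collapsing a measurable; instead it quotes Woodin's result that, starting from a measurable, one obtains a model $V_1$ in which $\aleph_\omega$ carries an $\omega_1$-descendingly complete uniform ultrafilter, embeds $V_1$ into a larger $V_2$ realizing the Juh\'asz--Magidor ``two-model situation'', and takes the \emph{tree} space $X(\mathcal F)$ on $(\aleph_\omega)^{<\omega}$ for the resulting filter $\mathcal F$. The key structural fact is that point-countable families in $X(\mathcal F)$ have point-countable closures (via \cite[Lemma~4.2]{JuMa}), which by Lemma~\ref{lm:clo-point-countable} rules out monotone $\omega_1$-resolvability. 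Note also that the space in Theorem~\ref{tm:example-of-size-measurable} is this tree space $X(\mathcal U)$, not the filter-generated topology on $\kappa$ that you describe; your ``re-reading'' misidentifies it.

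The real gap is the reflection step. For \emph{pure} Prikry forcing the direct-extension order $\le^*$ is $\kappa$-closed, and one can indeed fuse decisions for all $\dot f(\alpha)$, $\alpha<\kappa$, into a single condition while recording only the dependence on the finite stem below $\alpha$; that is the canonical-function theorem you invoke. Once L\'evy collapses are interleaved, $\le^*$ is only as closed as the bottom collapse block --- here essentially $\sigma$- or $\omega_1$-closed --- so there is no way to fuse $\kappa$ many decisions. ``Committing the collapse data in $H$'' cannot repair this: each $H(\nu)$ is a single condition in a nontrivial collapse poset and does not determine the collapse \emph{generic}. Concretely, if $c:\omega_1\to\omega_2^{V}$ is the generic surjection produced by the first collapse block, then the function $f(\alpha)=\min c^{-1}\{\alpha\}$ for $\alpha<\omega_2^{V}$ (and $0$ elsewhere) has values on $[\omega_1,\omega_2^{V})$ that are \emph{not} functions of the Prikry stem, so the displayed identity $f(\alpha)=F(\alpha,\{\kappa_i:\kappa_i<\alpha\})$ fails outright. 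Without the reflection, nothing in your argument excludes a witness $f$ for monotone $\omega_1$-resolvability of $(\kappa,\tau_U)$ in $V[G]$, and it is not at all clear that none exists. The paper's route through an $\omega_1$-descendingly complete ultrafilter and the tree space is arranged precisely to avoid this difficulty.
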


Do we really need to add $|X|$-many Cohen reals to make $X$ resolvable?

\begin{theorem}\label{tm:w-not-enough}
(1) It is consistent, modulo a measurable cardinal, that there is a 
crowded space $X$  with $|X|=\Delta(X)=\oo$ (so $X$ is \wreso{\oo})  such that
\begin{displaymath}
 V^{\Fn({\omega},2)}\models\text{``$X$ is irresolvable.''}
\end{displaymath}
(2) If $V=L$, then every crowded space with $|X|=\Delta(X)=\cf(|X|)$
is \wreso{{\omega}}, and so it is ${\omega}$-resolvable in $V^{\Fn({\omega},2)}$.\\
\noindent (3) If 
the cardinality of a crowded c.c.c space $X$  is less than the 
first  weakly inaccessible cardinal, then 
$X$ is ${\omega}$-resolvable in $V^{\Fn({\omega}_1,2)}$  
\footnote[4]{$\omega_1$ is not a misprint here}.
\end{theorem}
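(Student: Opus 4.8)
The three parts need rather different arguments, but in each case the plan is to reduce to a combinatorial statement about $\omega_1$ (or about $|X|$) and then invoke Theorem~\ref{tm:w1-res-in-ccc} or the easy half of its $\omega$-version. For \textbf{part (1)}, I would take $X=(\omega_1,\tau_U)$ with $\tau_U=U\cup\{\empt\}$ for a uniform ultrafilter $U$ on $\omega_1$. Such an $X$ is $T_1$ and crowded, and since every member of $U$ has size $\omega_1$ we get $\Delta(X)=|X|=\omega_1$, so $X$ is \wreso{\oo} for free (enumerate $X$ in order type $\omega_1$; proper initial segments have size $<\Delta(X)$, hence empty interior). Moreover $X$ is irresolvable, because a partition of $\omega_1$ into two dense sets would split some member of $U$ into two $U$-sets. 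The whole content of (1) is therefore to arrange that $U$ is \emph{Cohen-indestructible}, i.e.\ that the filter it generates is still an ultrafilter in $V^{\Fn({\omega},2)}$; then the same computation shows $X$ stays irresolvable there, while by Theorem~\ref{tm:w1-res-in-ccc} it does become $\omega_1$-resolvable once we pass to $V^{\Fn(\oo,2)}$. The first thing to check is that no cheaper topology works: if ``$G$ open iff $\omega_1\setm G\in I$'' for any ground-model ideal $I$, then the Cohen real itself is a new subset of $\omega_1$, and every new $A$ (together with its complement) is $I$-positive, hence dense, so the space is resolved immediately — genuine maximality of $U$ over the extension is unavoidable. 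Producing such a $U$ is the main obstacle and is where the measurable cardinal is spent: one forces a uniform ultrafilter on $\omega_1$ which is ``almost complete'' in the relevant sense (in the spirit of the constructions of non-regular ultrafilters on $\omega_1$, whose consistency strength is a measurable), the near-completeness being exactly what lets it absorb a countable forcing.

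For \textbf{part (2)} I would first record the reformulation: $X$ is \wreso{{\omega}} iff $X=\bigcup_{n<\omega}C_n$ is an increasing union of sets with empty interior (equivalently, there is a decreasing $\omega$-chain of dense sets with empty intersection). If $|X|=\omega$ this is immediate in a crowded $T_1$ space, since finite sets have empty interior, so assume $\lambda:=|X|=\Delta(X)=\cf(\lambda)$ is regular uncountable. Since $V=L$, $\diamondsuit_\lambda$ holds (Jensen), and the plan is to build $r:X\to\omega$ with $\{x:r(x)\ge n\}$ dense for every $n$ by a recursion of length $\lambda$ along an enumeration $X=\{x_{\alpha}:\alpha<\lambda\}$: at a stage $\alpha$ caught by the $\diamondsuit_\lambda$-sequence, if the guessed subset of $\alpha$ is the trace of a genuine nonempty open $G$, colour one \emph{uncoloured} point of $G$ with the colour demanded by a bookkeeping function — such a point exists because $|G|=\lambda$ while only fewer than $\lambda$ points are coloured so far — and otherwise colour $x_\alpha$ with $0$. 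The hypothesis $\Delta(X)=|X|=\lambda$ is used precisely here: every open set has the full size $\lambda$, so its $\diamondsuit$-traces can always be completed with room to spare. The main obstacle is the bookkeeping: since there may be up to $2^{\lambda}$ open sets, one must ensure that each of them — not merely its bounded approximations — receives points of cofinally many colours, which works because $\diamondsuit_\lambda$ reflects every subset of $\lambda$ on a stationary set. That some hypothesis is genuinely needed is shown by Theorem~\ref{tm:example-of-size-measurable}: at a measurable $\lambda$ the analogous space $(\lambda,\tau_U)$ is not even \wreso{\oo}, and $V=L$ rules measurables out. Finally, that a \wreso{{\omega}} space is $\omega$-resolvable in $V^{\Fn({\omega},2)}$ is the easy direction (add the $\omega$ Cohen reals and redistribute the pieces $C_{n+1}\setm C_n$ into $\omega$ dense sets), the $\omega$-analogue of the implication (2)$\Rightarrow$(3) of Theorem~\ref{tm:w1-res-in-ccc}.

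For \textbf{part (3)} I would argue by induction on $|X|$ among crowded c.c.c.\ spaces of size below the first weakly inaccessible $\kappa_0$, using the c.c.c.\ localization lemma: $X$ is $\omega$-resolvable iff every nonempty open set contains a nonempty open $\omega$-resolvable subspace — proved by taking a maximal pairwise-disjoint family $\mathcal H$ of such open subspaces, noting $|\mathcal H|\le\omega$ by c.c.c., that $\bigcup\mathcal H$ is dense open, and spreading $X\setm\bigcup\mathcal H$ into the resulting resolution (all of this may be carried out in $V$, since the topology — hence denseness and the chain condition for $X$ — is computed from $\tau\in V$, while ``$\Fn(\oo,2)$ forces $H$ $\omega$-resolvable'' is a property of $V$). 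Thus, given a nonempty open $G$: if $G$ has a nonempty open subspace of size $<|X|$ we are done by the inductive hypothesis; otherwise $\Delta(G)=|G|=\lambda:=|X|$, and as $\lambda<\kappa_0$, $\lambda$ is $\omega$, or $\omega_1$, or a successor $>\omega_1$, or singular. If $\lambda=\omega$, $G$ has a countable open subspace, which is \wreso{{\omega}}, hence $\omega$-resolvable in $V^{\Fn({\omega},2)}\subs V^{\Fn(\oo,2)}$; if $\lambda$ is singular with $\cf(\lambda)=\omega$, then $G$ is an increasing $\omega$-union of subsets of size $<\lambda=\Delta(G)$, each with empty interior, so $G$ is \wreso{{\omega}} and the same conclusion holds; if $\lambda=\omega_1$, then $|G|=\Delta(G)=\omega_1$, so $G$ is \wreso{\oo} and hence $\omega_1$-resolvable in $V^{\Fn(\oo,2)}$ by Theorem~\ref{tm:w1-res-in-ccc}. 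The remaining case — $\Delta(G)=|G|=\lambda$ with $\lambda$ of uncountable cofinality and $\omega_1<\lambda<\kappa_0$ — is where the real work is: one must push the argument of Theorem~\ref{tm:ccc-spaces-below-ww} (which gives \wreso{\oo} for c.c.c.\ spaces with $\omega_1\le\Delta\le|X|<\omega_{\omega}$) across all cardinals below $\kappa_0$, the point being that weak inaccessibility is the only obstruction to that inductive scheme. This is the main obstacle for (3); it also explains the ``$\omega_1$ is not a misprint'': by part~(1) a c.c.c.\ crowded space with $\Delta(X)=|X|=\omega_1$ need not be \wreso{{\omega}}, so $V^{\Fn({\omega},2)}$ cannot suffice in general below $\kappa_0$.
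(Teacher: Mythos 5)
Your part (1) is fatally flawed, and for ZFC reasons that no large cardinal can repair: a uniform ultrafilter $U$ on $\oo$ can never generate an ultrafilter after adding a single Cohen real, so the space $(\oo,U\cup\{\empt\})$ never witnesses (1). Indeed, since $\oo$ is not measurable, $U$ is not $\sigma$-complete, so there is a ground-model partition $\{P_n:n<{\omega}\}$ of $\oo$ with every $P_n\notin U$; for each $A\in U$ the set $s_A=\{n:A\cap P_n\ne\empt\}$ is infinite (a finite union of non-members cannot contain a member), and the Cohen real $c\subs{\omega}$ meets and omits every infinite ground-model subset of ${\omega}$, so $D=\bigcup_{n\in c}P_n$ and its complement both meet every $A\in U$; hence $X$ is resolvable in $V^{\Fn({\omega},2)}$ no matter which $U$ you start from, and the ``Cohen-indestructible'' $U$ you propose to force does not exist. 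The paper's proof avoids any preservation argument in the extension: it takes Kunen's consistency (from a measurable) of a maximal $\sigma$-independent family \cite{Ku}, gets from \cite{KuSzyTa} a crowded Baire space with $|X|=\Delta(X)=\oo$ all of whose open subspaces are irresolvable, checks that such a space is not almost resolvable, and then invokes Lemma \ref{lm:cohen-approx}(2): resolvability in $V^{\Fn({\omega},2)}$ would force almost resolvability already in $V$. So the irresolvability of $X$ in the extension is certified entirely inside the ground model.

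Parts (2) and (3) also have genuine gaps. In (2), your $\diamondsuit_\lambda$ recursion breaks at its key step: at a guessing stage ${\alpha}$ you only see the trace $G\cap{\alpha}$, which does not determine $G$ (many open sets share the same trace), and the only points you know to lie in $G$ are the points of that trace, all of which have already received their colours by stage ${\alpha}$ in your scheme; so nothing guarantees that a fixed open set ever receives points of arbitrarily large colour, and the sentence ``which works because $\diamondsuit_\lambda$ reflects every subset'' merely restates what must be proved. The paper instead derives (2) from the Kunen--Tall theorem \cite{KuTa} that $V=L$ excludes crowded Baire irresolvable spaces, combined with the Tamariz-Mascar\'ua--Villegas-Rodr\'iguez result \cite{TaVi} yielding \wresob{{\omega}}, and then applies Lemma \ref{lm:cohen-approx}(1); the strength of (2) is hidden in those theorems and is not obviously recoverable from a bare $\diamondsuit$ recursion. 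In (3) you explicitly leave the central case (cardinals above ${\omega}_1$ of uncountable cofinality) as ``the main obstacle,'' but that case is the whole content of the paper's argument: after reducing to $|X|=\Delta(X)$, one forms the strictly decreasing sequence ${\kappa}_0=\Delta(X)$, ${\kappa}_{i+1}=\cf({\kappa}_i)$ at singulars and ${\kappa}_{i+1}={\lambda}$ when ${\kappa}_i={\lambda}^+$ (every regular uncountable cardinal below $|X|$ is a successor, as there is no weakly inaccessible below), which terminates at ${\omega}$ or ${\omega}_1$ in finitely many steps; repeated use of the stepping-down Theorem \ref{tm:stepping-down} (together with the trivial cofinality step) gives that $X$ is \wreso{{\omega}} or \wreso{\oo}, and then Lemma \ref{lm:cohen-approx}(1), respectively Theorem \ref{tm:kappa-res-in-ccc}, yields ${\omega}$-resolvability in $V^{\Fn({\omega}_1,2)}$. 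Your case analysis reproduces fragments of this, but without carrying out that stepping-down induction the proof of (3) is missing.
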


The almost resolvability of c.c.c spaces was investigated by Pavlov in \cite{Pa}:
on page 53 Pavlov writes that  -- mimicked Malykhin’s method by using Ulam
matrices -- he  showed that 
every crowed ccc space of cardinality
$\oo$ is almost resolvable.
In \cite[Theorem 2.22]{DoAl} a stronger result was proved:
 a crowded c.c.c. space is almost resolvable, if its cardinality 
is less than the 
first  weakly inaccessible cardinal.
Theorem \ref{tm:w-not-enough}(2) is a further improvement of 
this result because \wresob{{\omega}} implies almost resolvability.

In \cite[3.12 Problem (2)]{AIT}
the authors ask {\em if  every space with countable cellularity and cardinality less than the first
inaccessible non-countable cardinal almost-${\omega}$-resolvable?}.
As we will see Theorem \ref{tm:w-not-enough} (3) gives a positive answer to a 
weakening of this question.

\section{Characterization of $\oo$-resolvability in c.c.c extensions. }

Instead of  Theorem \ref{tm:w1-res-in-ccc}
we prove the following stronger result.

\begin{theorem}\label{tm:kappa-res-in-ccc}
Assume that  $X$ is  a crowded topological space and ${\kappa}$ is an infinite cardinal.
If ${\kappa}=\cf\big(\br {\kappa};{\omega};,\subs \big)$ then following statements are equivalent.
\begin{enumerate}[(1)]
 \item $X$ is  ${\kappa}$-resolvable in some c.c.c-generic extension,
 \item there is a function $h:X\to \br {\kappa};{\omega};$ such that 
 $\bigcup h''U={\kappa}$ for each non-empty open $U\subs X$.
 \item $X$ is  ${\kappa}$-resolvable 
 in the  Cohen-generic extension   $V^{\Fn({\kappa},2)}$.
\end{enumerate}
\end{theorem}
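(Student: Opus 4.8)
\emph{Plan.} I would prove the cycle of implications $(3)\Rightarrow(1)\Rightarrow(2)\Rightarrow(3)$. The implication $(3)\Rightarrow(1)$ is immediate, since $\Fn({\kappa},2)$ is c.c.c. For $(1)\Rightarrow(2)$, fix a c.c.c.\ poset $\mathbb P$, a condition $p_0$, and a name $\dot f$ with $p_0\Vdash$ ``$\dot f:X\to{\kappa}$ and $\dot f^{-1}(\alpha)$ is dense for every $\alpha<{\kappa}$''. For $x\in X$ put $h(x)=\{\alpha<{\kappa}:\exists q\le p_0\ \ q\Vdash\dot f(x)=\check\alpha\}$. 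A maximal antichain below $p_0$ deciding $\dot f(x)$ is countable by c.c.c., so $h(x)\in[{\kappa}]^{\le\omega}$; enlarging each $h(x)$ to a countably infinite subset of ${\kappa}$ if needed (legitimate since $\cf{\kappa}>\omega$, see below, so $\sup h(x)<{\kappa}$) gives $h:X\to[{\kappa}]^{\omega}$. If $U\subs X$ is non-empty open and $\alpha<{\kappa}$, then $p_0$ forces $\dot f^{-1}(\alpha)\cap U\ne\emptyset$, so some $q\le p_0$ and some $x\in U$ satisfy $q\Vdash\dot f(x)=\check\alpha$, whence $\alpha\in h(x)\subs\bigcup h''U$. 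Thus $\bigcup h''U={\kappa}$ and $h$ witnesses (2).

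The main work is $(2)\Rightarrow(3)$. I would first record two preliminary facts. Since ${\kappa}=\cf([{\kappa}]^{\omega},\subs)$ is the cofinality of a $\sigma$-directed order it has uncountable cofinality, so $\cf{\kappa}>\omega$ and every countable subset of ${\kappa}$ is bounded. Fix a cofinal family $\{a_\xi:\xi<{\kappa}\}$ in $([{\kappa}]^{\omega},\subs)$, and for $x\in X$ let $r(x)$ be some $\xi$ with $h(x)\subs a_\xi$; replacing $h(x)$ by $a_{r(x)}$ we may assume $h(x)=a_{r(x)}$. Then (2) becomes: $\bigcup\{a_\xi:\xi\in r''U\}={\kappa}$ for every non-empty open $U$; this forces $r''U$ to be \emph{uncountable} (a countable union of countable sets is not ${\kappa}$), and also $\Delta(X)\ge{\kappa}>\omega$, so every non-empty open set is infinite; working in the $T_1$ case (the general case being analogous), every dense subset of $X$ meets every non-empty open set in an infinite set.

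Now I would work in $W=V^{\Fn({\kappa},2)}$, identifying the forcing with $\Fn({\kappa}\times\omega,2)$ and letting $c_\alpha\subs\omega$ ($\alpha<{\kappa}$) be the ${\kappa}$ mutually Cohen-generic reals it adds. Fix in $V$, for each $x$, an \emph{injective} enumeration $h(x)=\{b^x_n:n<\omega\}$ (to be chosen carefully — see the last paragraph), and define $f:X\to{\kappa}$ in $W$ by the ``polling'' rule $f(x)=b^x_{N(x)}$ where $N(x)=\min\{n:n\in c_{b^x_n}\}$, with $f(x)=b^x_0$ if there is no such $n$. Then the fibers of $f$ partition $X$ into ${\kappa}$ pieces, and the claim is that each $f^{-1}(\alpha)$ is dense in $W$, which gives ${\kappa}$-resolvability. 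For density: fix non-empty open $U$, $\alpha<{\kappa}$, and a condition $p$. The set $\{x\in U:\alpha\in h(x)\}$ is infinite, and since $p$ is finite one wants to choose $x$ in it with $\alpha=b^x_k$ for some $k$ such that $(\alpha,k)$ is not forced to $0$ by $p$ and no $(b^x_j,j)$ with $j<k$ is forced to $1$ by $p$. Granting such a choice, $q=p\cup\{((\alpha,k),1)\}\cup\{((b^x_j,j),0):j<k\}$ is a condition extending $p$ with $q\Vdash N(x)=\check k$, hence $q\Vdash\dot f(x)=\check\alpha$; so it is dense to force $\dot f^{-1}(\alpha)\cap U\ne\emptyset$.

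The hard part will be exactly this choice of $x$, and in particular making it work when $|X|>{\kappa}$, so that a single column $c_{b^x_k}$ may have to serve many points. What is needed is a witness $h$ together with enumerations of its values such that, for every non-empty open $U$ and every $\alpha$, the positions of $\alpha$ among the sets $h(x)$ with $x\in U\cap U_\alpha$ form an infinite set (and, more precisely, so that the initial signatures $(b^x_j)_{j<k}$ can be made to avoid an arbitrary finite set of coordinates). I would obtain this by a preliminary reduction exploiting ${\kappa}=\cf([{\kappa}]^{\omega})$ and the density of each $U_\alpha=\{x:\alpha\in h(x)\}$: pass to a witness for which every $U_\alpha$ is \emph{nowhere Baire}, i.e.\ a countable increasing union of relatively nowhere dense sets — e.g.\ by thinning each $U_\alpha$ to a suitable dense, locally countably-meager (``$\mathbb Q$-like'') subspace, which is available because $X$ is crowded — and then enumerate each $h(x)$ compatibly with such decompositions, so that each ``$\alpha$ is at position $k$'' set is nowhere dense in $U_\alpha$ and hence finitely many of them cannot cover the infinite set $U_\alpha\cap U$. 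Carrying out this reduction cleanly, and verifying that it preserves being a witness for (2), is the technical core; the rest is the routine density computation above together with $(3)\Rightarrow(1)\Rightarrow(2)$.
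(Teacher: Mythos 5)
Your $(3)\Rightarrow(1)$ and $(1)\Rightarrow(2)$ are correct and essentially identical to the paper's argument. The problem is $(2)\Rightarrow(3)$, and it is exactly where you yourself place it: with your ``polling'' rule the density argument needs, for every finite condition, every $\alpha$ and every open $U$, a point $x\in U$ for which $\alpha$ sits at a position of the enumeration of $h(x)$ compatible with the condition, and this is false for an arbitrary witness and arbitrary enumerations (if, say, $\alpha$ is always enumerated first, a single condition forcing $(\alpha,0)=0$ kills the density of $\dot f^{-1}(\alpha)$). You leave this ``technical core'' open, and the repair you sketch cannot work: if some dense $U_\alpha$ could be thinned to a dense set that is a countable union of sets nowhere dense in it, those sets would be nowhere dense in $X$, so $X$ would be almost resolvable; but almost resolvability does not follow from (2). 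Indeed, by Theorem \ref{tm:w-not-enough}(1) it is consistent that there is a Baire, open-hereditarily irresolvable space with $|X|=\Delta(X)={\omega}_1$; such a space satisfies (2) for ${\kappa}={\omega}_1$ (enumerate $X=\{x_\xi:\xi<{\omega}_1\}$ and put $h(x_\xi)=\xi\cup\{\xi\}$), yet it is not almost resolvable, so no ``$\mathbb Q$-like'' thinning exists. Crowdedness alone certainly does not provide such subspaces. Since Theorem \ref{tm:kappa-res-in-ccc} is a ZFC theorem, any correct proof must handle precisely these spaces, so this is a genuine gap, not a routine verification. A second, related slip: replacing $h(x)$ by some $a_{r(x)}\supseteq h(x)$ from the cofinal family is too weak; it gives $\bigcup\{a_{r(x)}:x\in U\}={\kappa}$ and $r''U$ uncountable, but not the cofinality you would need later.

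The paper's proof avoids all positional bookkeeping, and the one idea your outline is missing is the following strengthening of your $r$. Fix a cofinal $\mc A=\{A_{\alpha}:{\alpha}<{\kappa}\}\subs\br {\kappa};{\omega};$ of size ${\kappa}$ and choose $h^*(x)\in\mc A$ with $h^*(x)\supseteq\bigcup_{{\alpha}\in h(x)}A_{\alpha}$; then for every non-empty open $U$ the family $\{h^*(x):x\in U\}$ is \emph{cofinal} in $\br {\kappa};{\omega};$ (given $B$, cover it by some $A_{\beta}$ and use $\bigcup h''U={\kappa}$ to find $x\in U$ with ${\beta}\in h(x)$). Then one forces not with columns of Cohen reals but with the isomorphic poset of finite partial functions $p$ on $\mc A$ satisfying $p(A)\in A$; the generic is a selector $g$ with $g(A)\in A$, and $f=g\circ h^*$ witnesses ${\kappa}$-resolvability: given $q$, ${\alpha}$ and $U$, cofinality yields $x\in U$ with $h^*(x)\supsetneq\{{\alpha}\}\cup\bigcup\dom(q)$, so $h^*(x)$ is a coordinate outside $\dom(q)$ containing ${\alpha}$, and $q\cup\{\langle h^*(x),{\alpha}\rangle\}$ forces $f(x)={\alpha}$. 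No enumerations and no control of positions are needed. To salvage your version you would have to prove the combinatorial statement about enumerations in ZFC for arbitrary witnesses, which the example above strongly suggests is not the right route; otherwise, switch to the selector-type forcing.
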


We say that a function $g:X\to {\kappa}$ {\em witnesses} that $X$ is 
${\kappa}$-resolvable if 
$$\{x\in X: g(x)={\alpha}\}\subs^{dense}X$$ for each ${\alpha}<{\kappa}$.

\begin{proof}
First we show that $(1) \to (2)$. 
Assume that $\mathbb{P}$ is a c.c.c. poset such that 
there is a function  $g \in V^{\mathbb{P}}$ witnessing the  
${\kappa}$-resolvability of $X$.
  
  For each $x \in X$ define 
$$h(x) = \{ \alpha < {\kappa} : \exists p^x_{\alpha} \in {\mathbb{P}} ( 
p^x_{\alpha} \Vdash \dot{g}(\check{x}) = \check{\alpha}) \} . $$ 
Since the conditions $\{p^x_{\alpha}:{\alpha}\in h(x)\}$
 are pairwise incomparable and $\mathbb P$ is c.c.c , 
 the set $h(x)$ is
countable.

We now show that the function $h$ defined above satisfies $(2)$.  Fix $\alpha < 
{\kappa}$ and $U $ an open subset of $X$.  We need to show that there exists 
$x \in U$ such that ${\alpha}\in h(x)$.    Since
$$ V^{\mathbb{P}} \models  
g^{-1}(\{ {\alpha} \} ) \subset^{dense} X$$
it follows that 
there is $x\in U$ such that 
$$ V^{\mathbb{P}} \models g(x) = {\alpha}.$$
Thus, there exists $p \in {\mathbb{P}}$ such that 
$$p \Vdash 
``  \dot{g}(\check{x})=\check{\alpha}.''$$ 
Then ${\alpha}\in h(x)$.

Next we now show that $(2) \to (3)$. 
Let $\mc A$ be a cofinal subset of $\br {\kappa};{\omega};$ with 
$|\mc A|={\kappa}$.

Let $\{A_{\alpha}:{\alpha}<{\kappa}\}$ be an enumeration of $\mc A$,
and for each $x\in X$   pick
$$h^*(x)\in \mc A \text{ such that }h^*(x)
\supset\bigcup_{{\alpha}\in h(x)} A_{\alpha}.$$

Then for all non-empty open $U$
\begin{displaymath}\tag{$+$}\label{eq:cof}
 \{h^*(x):x\in U\}\text{ is cofinal in $\br {\kappa};{\omega};$.}
\end{displaymath}

 Next we note that forcing with 
 $\Fn({\kappa}, 2)$ is the same as forcing with $\Fn({\kappa}, \omega)$.  
 Further, $\Fn({\kappa}, \omega)$ is isomorphic to 
 $$\mathbb{P} = \{ p \in \Fn(\mc A, 
 {\kappa})\ :\ \forall A \in \dom (p)\  p(A) \in  A\}.$$
Indeed, for each $A\in \mc A$ fix a bijection $\rho_A:{\omega}\to A$, and
then  for $q\in \Fn({\kappa}, \omega)$ define $\varphi(q)\in \mathbb P$
as follows:
\begin{enumerate}[(i)]
 \item $\dom(\varphi(q))=\{A_{\alpha}:{\alpha}\in \dom(q)\}$, and
\item $\varphi(q)(A_{\alpha})=\rho_{A_{\alpha}}(q({\alpha}))$ for 
$A_{\alpha}\in \dom(\varphi(q))$.
 \end{enumerate}
Then $\varphi$ is clearly an  isomorphism  between
 $\Fn({\kappa}, \omega)$ and $\mathbb{P}$.
 
 We will proceed using $\mathbb{P}$.

Let $G$ be a $\mathbb{P}$-generic filter, and   let $g = \bigcup G$.  Then $g\in 
V^{\mathbb{P}}$ and $g:\mc A \to {\kappa}$ such that $g(A)\in A$.  

We claim that $f=g\circ h^*$
 witnesses that $X$ is ${\kappa}$-resolvable.

 Fix $\alpha < {\kappa}$ and an open $U \subset X$.

 Let $q \in \mathbb{P}$ be arbitrary. Then,
by \eqref{eq:cof}, there is $x\in U$ such that 
$$\{\alpha\}\cup \bigcup dom(q)\subsetneq h^*(x).$$
Then $h^*(x)\notin \dom(q)$, and ${\alpha}\in h^*(x)$,
so 
\begin{displaymath}
 p=q\cup\{\<h^*(x),{\alpha}\>\}\in \mbb P_1,
\end{displaymath}
and 
\begin{displaymath}
 p\Vdash (g\circ h^*)(\check x)=\check{\alpha}.
\end{displaymath}

Thus, by genericity, there is $p \in G$ and $x\in U$  such that 
\begin{displaymath}
 p \Vdash ( 
\dot{g\circ h^*}(\check {x})=\check{\alpha}).
\end{displaymath}
Hence
 $$V^{\mathbb{P}}\models X \, \mbox{is} \, 
{{\kappa}}\mbox{-resolvable}.$$
\medskip
Finally $(3)\to (1)  $  is trivial.
\end{proof}

\begin{problem}
Can we drop the assumption ${\kappa}=\cf(\br {\kappa};{\omega};,\subset)$
from Theorem \ref{tm:kappa-res-in-ccc}? 
\end{problem}

\section{On \wresob{\oo} of c.c.c spaces}

We start with an easy observation.
\begin{lemma}\label{lm:open-subspace}
Let $X$ be a topological space and $\mc B\subs \mc P(X)$.
If every $B\in \mc B$ is \wreso{{\kappa}}, then so is 
$\overline{\cup\mc B}$. So every space contains a greatest \wreso{{\kappa}} subspace
(that subspace can be empty, of course).
\end{lemma}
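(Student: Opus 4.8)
The plan is to reduce the lemma to the statement that an \emph{arbitrary union} (not its closure) of \wreso{{\kappa}} subspaces is again \wreso{{\kappa}}, which I would then prove by a ``least relevant index'' argument. The reduction rests on one trivial remark: if $D$ is dense in a space $Z$ and $D$ is \wreso{{\kappa}}, then so is $Z$ --- given a witness $g:D\to{\kappa}$, extend it by the constant $0$ on $Z\setm D$, and observe that for every ${\alpha}<{\kappa}$ the set $\{z\in Z:g(z)\ge{\alpha}\}$ contains $\{x\in D:g(x)\ge{\alpha}\}$, which is dense in $D$ and therefore dense in $Z$. Since $\bigcup\mc B$ is dense in $\overline{\bigcup\mc B}$, it thus suffices to show that $\bigcup\mc B$, with the subspace topology, is \wreso{{\kappa}}.

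To do that, I would well-order $\mc B=\{B_{\xi}:{\xi}<{\theta}\}$, fix for each ${\xi}$ a witness $f_{\xi}:B_{\xi}\to{\kappa}$, and define $f:\bigcup\mc B\to{\kappa}$ by $f(x)=f_{{\xi}(x)}(x)$, where ${\xi}(x)$ denotes the least ${\xi}$ with $x\in B_{\xi}$. To check that $f$ is a witness, fix ${\alpha}<{\kappa}$ and a nonempty open $U\subseteq X$ with $U\cap\bigcup\mc B\ne\empt$, and let ${\xi}$ be least with $U\cap B_{\xi}\ne\empt$. Since $f_{\xi}$ witnesses \wresob{{\kappa}} of $B_{\xi}$, the set $\{x\in B_{\xi}:f_{\xi}(x)\ge{\alpha}\}$ is dense in $B_{\xi}$, hence meets the nonempty relatively open set $U\cap B_{\xi}$; for $x$ in this intersection, minimality of ${\xi}$ gives $x\notin B_{\eta}$ whenever ${\eta}<{\xi}$, so ${\xi}(x)={\xi}$ and $f(x)=f_{\xi}(x)\ge{\alpha}$. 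Thus $U$ meets $\{x\in\bigcup\mc B:f(x)\ge{\alpha}\}$, as required. The last sentence of the lemma then follows by applying what we have proved to the family $\mc B$ of \emph{all} \wreso{{\kappa}} subspaces of $X$: its union's closure is \wreso{{\kappa}}, hence is itself the largest \wreso{{\kappa}} subspace of $X$ (possibly empty).

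I do not expect a real obstacle. The one point worth emphasizing is that one should \emph{not} attempt to make the disjointified pieces $B_{\xi}\setm\bigcup_{{\eta}<{\xi}}B_{\eta}$ individually \wreso{{\kappa}} --- monotone ${\kappa}$-resolvability is not inherited by arbitrary subspaces --- but instead argue directly with the amalgamated function $f$. What makes this go through is precisely that the index ${\xi}$ is chosen minimal among those meeting $U$, which is exactly what guarantees that the witnessing point supplied by $f_{\xi}$ avoids all earlier $B_{\eta}$ and therefore retains its $f$-value.
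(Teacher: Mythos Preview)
Your argument is correct. The paper itself does not supply a proof of this lemma --- it is stated as ``an easy observation'' and left to the reader --- so there is nothing to compare against; your writeup fills that gap cleanly, and the minimal-index trick you highlight is exactly the right way to avoid the pitfall of disjointifying.
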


\begin{corollary}\label{cm:dense}
Let $X$ be a topological space.  Let $Z$ be a dense subset of $X$.  If  $Z$ is \wreso{\kappa},  then $X$ is also \wreso{\kappa}.
\end{corollary}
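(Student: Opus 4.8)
The plan is to deduce Corollary \ref{cm:dense} directly from Lemma \ref{lm:open-subspace} by taking $\mc B$ to be the singleton $\{Z\}$. First I would observe that if $Z$ is \wreso{{\kappa}}, then trivially every member of the family $\mc B=\{Z\}$ is \wreso{{\kappa}}, so Lemma \ref{lm:open-subspace} applies and gives that $\overline{\cup\mc B}=\overline{Z}$ is \wreso{{\kappa}}. Since $Z$ is dense in $X$, we have $\overline{Z}=X$, and hence $X$ is \wreso{{\kappa}}.

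One small point worth making explicit is the distinction between the closure of $Z$ computed in $X$ and the space $\overline{Z}$ appearing in Lemma \ref{lm:open-subspace}: the lemma is stated for a subset $\mc B\subs\mc P(X)$, so $\overline{\cup\mc B}$ is understood as the closure inside the ambient space $X$, which is exactly what we want. Thus there is no subtlety about relative topologies here — the closure of a dense subset $Z$ in $X$ is all of $X$ by definition of density, and that is the end of the argument.

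I do not anticipate any genuine obstacle; this is a one-line corollary of the preceding lemma, and the only thing to be careful about is invoking Lemma \ref{lm:open-subspace} with the correct (singleton) family and correctly identifying $\overline{Z}=X$. If one preferred an argument not routed through Lemma \ref{lm:open-subspace}, one could instead argue directly: fix a function $f:Z\to{\kappa}$ witnessing \wresob{{\kappa}} of $Z$, extend it arbitrarily to $\tilde f:X\to{\kappa}$ (say $\tilde f\restriction Z=f$ and $\tilde f$ constant on $X\setminus Z$), and check that $\{x\in X:\tilde f(x)\ge{\alpha}\}\sups\{x\in Z:f(x)\ge{\alpha}\}$, where the latter is dense in $Z$, hence dense in $X$ since $Z$ is dense in $X$ and density is transitive. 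But the cleanest exposition is simply to cite Lemma \ref{lm:open-subspace}.
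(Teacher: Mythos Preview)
Your proposal is correct and matches the paper's approach: the paper states this as an immediate corollary of Lemma \ref{lm:open-subspace} without giving a separate proof, and your application of that lemma with the singleton family $\mc B=\{Z\}$ (together with $\overline{Z}=X$) is exactly the intended one-line deduction.
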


Before proving Theorem \ref{tm:ccc-spaces-below-ww}
we prove the following ``stepping-down'' theorem.

\begin{theorem}\label{tm:stepping-down}
If  $X$ is a ${\kappa}$-c.c.,  \wreso{{\kappa}^+} space, then 
$X$ is \wreso{{\kappa}} as well.
\end{theorem}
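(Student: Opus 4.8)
The plan is to find the witnessing function in the factored form $g=r\circ f$, where $f\colon X\to{\kappa}^{+}$ witnesses \wresob{{\kappa}^+} and $r\colon{\kappa}^{+}\to{\kappa}$ is a suitably chosen ``compression''. Put $E_{\alpha}=\{x\in X:f(x)\ge{\alpha}\}$, so the hypothesis says each $E_{\alpha}$ (${\alpha}<{\kappa}^{+}$) is dense, equivalently $f''U$ is cofinal in ${\kappa}^{+}$ for every non-empty open $U$. Call $R\subs{\kappa}^{+}$ \emph{$f$-dense} if $f^{-1}(R)$ is dense in $X$, i.e.\ $R\cap f''U\ne\empt$ for every non-empty open $U$. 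First I would reduce the theorem to the following purely combinatorial statement: \emph{there is a $\subs$-decreasing, continuous sequence $\<R_{\gamma}:{\gamma}\le{\kappa}\>$ of subsets of ${\kappa}^{+}$ with $R_{0}={\kappa}^{+}$, $R_{{\kappa}}=\empt$, and every $R_{\gamma}$ (${\gamma}<{\kappa}$) $f$-dense.} Indeed, given such a sequence, for $x\in X$ let $g(x)$ be the unique ${\gamma}<{\kappa}$ with $f(x)\in R_{\gamma}\setm R_{{\gamma}+1}$; this is well defined and $<{\kappa}$ precisely because the $R_{\gamma}$ decrease continuously from ${\kappa}^{+}$ to $\empt$, and then $\{x:g(x)\ge{\gamma}\}=f^{-1}(R_{\gamma})$ is dense for each ${\gamma}<{\kappa}$, so $g$ witnesses \wresob{{\kappa}}.

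The point of the $\kappa$-c.c.\ hypothesis is that the ``obvious'' $f$-dense sets do not suffice. The tails $[{\alpha},{\kappa}^{+})$ are $f$-dense (their $f$-preimages are exactly the $E_{\alpha}$), but a decreasing continuous ${\kappa}$-sequence of tails $[{\alpha}_{\gamma},{\kappa}^{+})$ has intersection $[\sup_{{\gamma}<{\kappa}}{\alpha}_{\gamma},{\kappa}^{+})\ne\empt$, since $\sup_{{\gamma}<{\kappa}}{\alpha}_{\gamma}<{\kappa}^{+}=\cf({\kappa}^{+})$. So the $R_{\gamma}$'s have to be genuinely non-trivial $f$-dense sets (with cofinal complements in ${\kappa}^{+}$), and $\kappa$-c.c.\ is what is used to manufacture them; with no chain condition the compression can fail, cf.\ Theorem~\ref{tm:example-of-size-measurable}.

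To build $\<R_{\gamma}\>$ I would fix an Ulam matrix $\{A^{\xi}_{\alpha}:{\xi}<{\kappa},\ {\alpha}<{\kappa}^{+}\}$ on ${\kappa}^{+}$ --- for each ${\xi}$ the sets $\{A^{\xi}_{\alpha}:{\alpha}<{\kappa}^{+}\}$ are pairwise disjoint, and for each ${\alpha}$ the set ${\kappa}^{+}\setm\bigcup_{{\xi}<{\kappa}}A^{\xi}_{\alpha}$ has size $\le{\kappa}$ --- and pull it back through $f$. Here $\kappa$-c.c.\ enters cleanly: for a fixed ${\xi}$ the sets $f^{-1}(A^{\xi}_{\alpha})$ are pairwise disjoint, so only $<{\kappa}$ of them can have non-empty interior (those interiors would form a cellular family); hence $B:=\{{\alpha}<{\kappa}^{+}:\exists{\xi}<{\kappa}\ \ \inte(f^{-1}(A^{\xi}_{\alpha}))\ne\empt\}$ has size $\le{\kappa}$, so is bounded below some ${\alpha}^{*}<{\kappa}^{+}$. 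Thus every matrix entry $A^{\xi}_{\alpha}$ with ${\alpha}\ge{\alpha}^{*}$ is $f$-small (its $f$-preimage has empty interior), while the near-covering property of the rows keeps enough $f$-dense sets around; I would then assemble $\<R_{\gamma}\>$ (equivalently, the corresponding $r$) out of these $f$-small matrix entries together with the trivial $f$-dense tails, so that $R_{{\kappa}}=\empt$ and yet every $R_{\gamma}$ still meets every $f''U$.

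The main obstacle is exactly this last assembly. The ideal of $f$-small subsets of ${\kappa}^{+}$ is not ${\kappa}$-additive, so one cannot freely glue the $A^{\xi}_{\alpha}$'s together; the combinatorics of the Ulam matrix must be used, together with the chain condition and the density of all the $E_{\alpha}$'s, to certify that the cofinally-many-times-shrinking sets $R_{\gamma}$ remain $f$-dense. (One also cannot simply diagonalise against the whole family $\{f''U:U\text{ non-empty open}\}$, because $\kappa$-c.c.\ puts no bound on the $\pi$-weight of $X$.) Pushing this step through is the heart of the proof, and it is where the hypothesis is genuinely needed.
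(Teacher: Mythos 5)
Your setup is reasonable and your one concrete use of the chain condition is sound (for a fixed column $\xi$ the preimages $f^{-1}(A^{\xi}_{\alpha})$ are pairwise disjoint, so fewer than $\kappa$ of them have non-empty interior, whence all entries beyond some $\alpha^{*}<\kappa^{+}$ are $f$-small). But the proof stops exactly where the theorem lives: you never construct the decreasing continuous sequence $\<R_{\gamma}:{\gamma}\le{\kappa}\>$ (equivalently the compression $r$), and you say yourself that this assembly is ``the heart of the proof''. Knowing that individual entries $A^{\xi}_{\alpha}$ are $f$-small does not tell you that the tail unions $\bigcup_{{\xi}\ge{\zeta}}A^{\xi}_{\alpha}$ of any single row stay $f$-dense, and that is precisely the issue: the paper's proof is organized around the dichotomy of whether some row of the Ulam matrix has all its tail unions dense (in the dense set $Z_{\alpha}$ determined by that row) or whether every row $\alpha$ fails at some ${\zeta}_{\alpha}$ on some non-empty open $U_{\alpha}$. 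Your proposal has no counterpart of the second (hard) case, and it is there that the remaining work happens: one uniformizes ${\zeta}_{\alpha}={\zeta}$ on a set of size ${\kappa}^{+}$, picks $K$ of size $\kappa$, uses property (iii) of the matrix to find a dense $Z$ contained in all the relevant $Z_{\alpha}$, proves via column-disjointness and a pigeonhole below $\zeta$ that no $\kappa$ of the sets $U_{\alpha}$ ($\alpha\in K$) can simultaneously meet $Z$, and then uses $\kappa$-c.c.\ again to stabilize the closures of the decreasing unions $V_{\zeta}$ so as to extract a monotone $\kappa$-resolution of a non-empty open subset.

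There is also a structural over-commitment in your reduction. You insist on a global witness of the factored form $g=r\circ f$, i.e.\ a $\kappa$-sequence of $f$-dense subsets of ${\kappa}^{+}$ decreasing continuously to $\empt$; the paper does not prove (and does not need) anything that strong. In the hard case its witness on the open set $V$ is built from the sets $T_{\zeta}$, which are Boolean combinations of the open sets $V_{\xi}$ with the tail set $Z$, so it does not factor through $f$, and it is only defined on a non-empty open subspace; the passage to all of $X$ goes through Lemma~\ref{lm:open-subspace} and Corollary~\ref{cm:dense} (it suffices that every $\kappa$-c.c., \wreso{{\kappa}^+} space contain a non-empty open \wreso{{\kappa}} subset). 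So beyond the unfinished assembly step, your stronger ``factored, global'' intermediate claim is itself unjustified and may well require extra argument; if you weaken it to ``some non-empty open subspace is \wreso{{\kappa}}'' and then run the row-versus-column dichotomy sketched above, you recover the actual proof.
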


The proof uses ideas from \cite{KuPr}.

\begin{proof}
 Since an open subspace of a ${\kappa}$-c.c.,  \wreso{{\kappa}^+} space
 is also ${\kappa}$-c.c. and   \wreso{{\kappa}^+},
by Lemma \ref{lm:open-subspace} it is enough to show that 
\begin{enumerate}
 \item[$(*)$] 
 every ${\kappa}$-c.c.,  \wreso{{\kappa}^+} space $X$ has a  
  \wreso{{\kappa}} non-empty open subset.
\end{enumerate}

 Ulam \cite{Ul} proved that there is a ``matrix'' 
 $$\<M_{{\alpha},{\zeta}}:{\alpha}<{\kappa}^+, {\zeta}<{\kappa}\>\subs \mc P({\kappa}^+)
 $$ such that 
 \begin{enumerate}[(i)]
  \item $M_{{\alpha},{\xi}}\cap M_{{\beta},{\xi}}=\empt$ for 
  $\{{\alpha},{\beta}\}\in \br {\kappa}^+;2;$ and ${\xi}\in {\kappa}$, \item  
  $M_{{\alpha},{\xi}}\cap M_{{\alpha},{\zeta}}=\empt$ for ${\alpha}\in {\kappa}^+$
  and 
  $\{{\xi},{\zeta}\}\in \br {\kappa};2;$,\medskip
  \item and
  $|M^-_{\alpha}|\le {\kappa}$, where 
  $M^-_{\alpha}={\kappa}^+\setm\bigcup_{{\zeta}<{\kappa}}M_{{\alpha},{\zeta}}$
  for  ${\alpha}<{\kappa}^+$.
 \end{enumerate}

Fix a partition $\{Y_\eta:\eta<{\kappa}^+\}$ witnessing that 
$X$ is \wreso{{\kappa}^+}.

Let $$Z_{{\alpha},{\zeta}}=\bigcup\{Y_\eta:\eta\in M_{{\alpha},{\zeta}}\}$$
for ${\alpha}<{\kappa}^+$ and ${\zeta}<{\kappa}$,
and let 
\begin{displaymath}
Z_{\alpha}=\bigcup_{{\zeta}<{\kappa}}Z_{{\alpha},{\zeta}}.         
\end{displaymath}
Since $Z_{\alpha}=\bigcup\{Y_\eta:
\eta\in {\kappa}^+\setm M^-_{\alpha}\}$,
assumption  (iii) implies that every $Z_{\alpha}$ is dense in $X$.

\noindent {\bf Case 1.}
There is ${\alpha}<{\kappa}^+$ such that 
for all ${\zeta}<{\kappa}$
\begin{displaymath}
 \bigcup_{{\zeta}\le {\xi}}Z_{{\alpha},{\xi}}\subs^{dense}Z_{\alpha}.
\end{displaymath}

Then $( Z_{{\alpha},\zeta})_{\zeta < \kappa}$ witnesses $Z_{\alpha}$ is 
\wreso{{\kappa}} and so by corollary \ref{cm:dense} , $X$ is also \wreso{{\kappa}}.

\noindent {\bf Case 2.}
For all  ${\alpha}<{\kappa}^+$ there is  ${\zeta}_{\alpha}<{\kappa}$ 
and there is an non-empty open  set $U_{\alpha}\in \tau_X$ such that 
\begin{displaymath}\tag{$\dag$}
\bigcup_{{\zeta}_{\alpha}\le {\xi}}Z_{{\alpha},{\xi}}\cap U_{\alpha}=\empt. 
\end{displaymath}

Then there is a set $I\in \br {\kappa}^+;{\kappa}^+;$ and there is 
an ordinal ${\zeta}<{\kappa}$
such that ${\zeta}_{\alpha}={\zeta}$ for all ${\alpha}\in I$.

Fix an arbitrary  $K\in \br I;{\kappa};$. By (iii) we can find  $\rho<{\kappa}^+$
such that 
\begin{displaymath}
 \bigcup_{{\alpha}\in K}M^-_{\alpha}\subs \rho.
\end{displaymath}
Let $Z=\bigcup_{\rho<\eta}Y_\eta$.
Then $Z\subs^{dense} X$ and $Z\subs Z_{\alpha}$ for all 
${\alpha}\in K$.

\medskip
\noindent{\bf Claim.}
{\em If $L\in \br K;{\kappa};$ then 
\begin{displaymath}
 \bigcap_{{\alpha}\in L}U_{\alpha}\cap Z=\empt.
\end{displaymath}}
\begin{proof}[Proof of the Claim.]
Assume on the contrary that  $z\in \bigcap_{{\alpha}\in L}U_{\alpha}\cap Z$. 
Then  $z\in Y_\eta$ for some $\rho<\eta$.

Let ${\alpha}\in L$.  
Then $\eta\in {\kappa}^+\setminus\rho\subs \bigcup_{{\xi}<{\kappa}}M_{{\alpha},{\xi}}$.
Pick ${\xi}_{\alpha}<{\kappa}$ with $\eta\in M_{{\alpha},{\xi}_{\alpha}}$.
Then $Y_\eta\subs Z_{{\alpha},{\xi}_{\alpha}}$, so 
$Z_{{\alpha},{\xi}_{\alpha}}\cap U_{\alpha}\ne \empt$, so 
${\xi}_{\alpha}<{\zeta}_{\alpha}={\zeta}$ by ($\dag$).

Since ${\zeta}<{\kappa}=|L|$, there are ${\alpha}\ne {\beta}\in \br L;2;$ such that 
${\xi}_{\alpha}= {\xi}_{\beta}$. Thus $\eta\in M_{{\alpha},{\xi}_{\alpha}}\cap 
M_{{\beta},{\xi}_{\beta}}$ which contradicts (i) because 
${\xi}_{\alpha}={\xi}_{\beta}$.
\end{proof}
\medskip

Fix an enumeration  $K=\{\chi_{\xi}:{\xi}<{\kappa}\}$,
and let $V_{{\zeta}}=\bigcup_{{\zeta}<{\xi}}U_{\chi_{\xi}}$.
Then the sequence $\<V_{\zeta}:{\zeta}<{\kappa}\>$
is decreasing and 
\begin{displaymath}
 \bigcap_{{\zeta}<{\kappa}}V_{\zeta}\cap Z=\empt
\end{displaymath}
by the Claim.

Since $X$ is ${\kappa}$-c.c. there is ${\xi}<{\kappa}$
such that $\overline{V_{\zeta}}=\overline{V_{\xi}}$ for all
${\xi}<{\zeta}<{\kappa}$. 

We can assume that ${\xi}=0$.  Let  
\begin{displaymath}
T_{{\zeta}}=\left\{
\begin{array}{ll}
V_0\setminus Z&\text{if ${\zeta}=0$,}\\ \\
((\bigcap_{{\xi}<{\zeta}}V_{\xi})\setminus V_{\zeta})\cap Z&\text{if ${\zeta}>0$.} 
\end{array}
\right.
\end{displaymath}
Then 
\begin{displaymath}
\bigcup_{{\xi}<{\zeta}}T_{\zeta}\supset V_{\xi}\cap Z\subs^{dense}V.  
\end{displaymath}
thus the partition 
$\{T_{\zeta}:{\zeta}<{\kappa}\}$ witnesses that 
$V$ is \wreso{{\kappa}}.
\end{proof}

\begin{proof}[Proof of Theorem \ref{tm:ccc-spaces-below-ww}]
Let $\mc Y=\{Y\in \tau_X: |Y|=\Delta(Y)\}$.

Then $\bigcup \mc Y$ is dense in $X$, and every open subset of every $Y \in \mc Y$ is also in $\mc Y$.  
Thus by lemma \ref{lm:open-subspace} it is enough to prove that a c.c.c. space $Y$ 
with $\oo\le |Y|=\Delta(Y) <\omega_{\omega}$ is \wreso{\oo}.

Let $Y \in \mc Y$ such that $\omega_{n} = |Y|$.  Clearly, $Y$ is \wreso{\omega_{n}} as $|Y|=\Delta(Y)=\omega_{n}$.  Since $Y$ is c.c.c. then $Y$ is $\omega_{n-1}$-c.c..  By theorem \ref{tm:stepping-down}, $Y$ is \wreso{\omega_{n-1}}.  By continually applying theorem \ref{tm:stepping-down} we conclude that $Y$ is \wreso{\oo}.
\end{proof}

\begin{problem}
Is it true that every crowded c.c.c  space with $\Delta(X)\ge \oo$
 is \wreso{\oo}?
\end{problem}

\section{Spaces which are not \wreso{\oo}.}

If $X$ is a topological space, and $\mc D\subs \mc P(D)$,
write
\begin{displaymath}
\clo{\mc D} =\{\overline D:D\in \mc D\}
\end{displaymath}

\begin{lemma}\label{lm:clo-point-countable}
Let $X$ be a topological space. Assume that $\clo {\mc D}$
is point-countable for each point-countable family $\mc D\subs \mc P(X)$.
Then $X$ is not contain any \wreso{\oo} subspace $Y$.
\end{lemma}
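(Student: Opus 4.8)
The plan is to argue by contradiction. Suppose $Y\subseteq X$ is a non-empty \wreso{\oo} subspace, fix $f\colon Y\to\oo$ witnessing this, and for $\alpha<\oo$ put
$$D_\alpha=\{y\in Y:f(y)\ge\alpha\},$$
so that each $D_\alpha$ is dense in $Y$ and $D_\alpha\supseteq D_\beta$ whenever $\alpha\le\beta$. The first — and cheapest — step is to notice that $\mc D=\{D_\alpha:\alpha<\oo\}$ is a point-countable subfamily of $\mc P(X)$: a point $y\in Y$ lies in $D_\alpha$ exactly when $\alpha\le f(y)$, so $\{\alpha:y\in D_\alpha\}$ is contained in the countable ordinal $f(y)+1$, while no point of $X\setminus Y$ lies in any $D_\alpha$. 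Thus the hypothesis applies to $\mc D$, and $\clo{\mc D}=\{\overline{D_\alpha}:\alpha<\oo\}$, with closures taken in $X$, is point-countable.

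Next I would combine this with density. Since each $D_\alpha$ is dense in $Y$ we have $Y\subseteq\overline{D_\alpha}$ for every $\alpha<\oo$, so any fixed $y_0\in Y$ lies in every member of $\clo{\mc D}$; point-countability then forces $\{\overline{D_\alpha}:\alpha<\oo\}$ to have only countably many distinct members, and being a $\subseteq$-decreasing $\oo$-sequence it must stabilise: there are $\alpha^\ast<\oo$ and a closed set $C$ with $\overline{D_\alpha}=C$ for all $\alpha\ge\alpha^\ast$, and then necessarily $C=\overline Y$. Since $D_{\alpha^\ast}$ is a dense \wreso{\oo} subspace of $C$, Corollary \ref{cm:dense} gives that $C$ is \wreso{\oo}; replacing $Y$ by $C$ we may assume $Y$ is closed in $X$, so from now on closures in $X$ agree with closures in $Y$ on subsets of $Y$. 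Finally note that $Y$, being \wreso{\oo}, is crowded, and that every non-empty open $U\subseteq Y$ is again crowded and \wreso{\oo} (restrict $f$); in particular $|U|\ge\oo$ for every such $U$, because a countable crowded space cannot be \wreso{\oo} — a partition of it into $\oo$ pieces has all but boundedly many pieces empty, making some tail non-dense.

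It remains to derive a contradiction, and this is the step I expect to be the real obstacle. The idea is to build a point-countable family whose closures genuinely do not collapse. Concretely, I would first locate a point $x^\ast\in Y$ together with a pairwise disjoint sequence $\langle I_n:n<\omega\rangle$ of non-empty open subsets of $Y$ clustering at $x^\ast$, i.e.\ with $\{n:I_n\not\subseteq V\}$ finite for every neighbourhood $V$ of $x^\ast$. Using $|I_n|\ge\oo$, fix for each $n$ an injection $\alpha\mapsto p^\alpha_n$ of $\oo$ into $I_n$ and set $D^\ast_\alpha=\{p^\alpha_n:n<\omega\}$. The family $\{D^\ast_\alpha:\alpha<\oo\}\subseteq\mc P(X)$ is point-finite (the $I_n$ are disjoint and each $\alpha\mapsto p^\alpha_n$ is injective), each $D^\ast_\alpha$ clusters at $x^\ast$, so $x^\ast\in\overline{D^\ast_\alpha}$, and the closures $\overline{D^\ast_\alpha}$ are pairwise distinct, being distinguished by their traces $\overline{D^\ast_\alpha}\cap I_0$. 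Hence $x^\ast$ lies in uncountably many members of $\clo{\{D^\ast_\alpha:\alpha<\oo\}}$, contradicting the hypothesis. The delicate point is manufacturing the clustering configuration $\langle I_n\rangle,x^\ast$ out of \wresob{\oo} of $Y$ together with the closure hypothesis; an alternative line is to try to run the same contradiction with the point-countable family $\{\overline{Y_\zeta}:\zeta<\oo\}$ of closures of the pieces of a witnessing partition, exploiting that $\inte\big(\bigcup_{\zeta<\xi}Y_\zeta\big)=\empt$ for every $\xi<\oo$. Everything apart from this extraction step is routine bookkeeping.
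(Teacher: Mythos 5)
Your first family $D_\alpha=\{y\in Y:f(y)\ge\alpha\}$ is exactly the right one (up to reindexing it is the paper's family of tails), but the argument derails where you interpret point-countability of $\clo{\mc D}$ as a statement about the \emph{set} of distinct closures. Since each $D_\alpha$ is dense in $Y$ and contained in $Y$, all these closures are literally equal to $\overline Y$, so under your reading the hypothesis gives you nothing for this family --- which is why you are forced into the final construction, and that step is not a ``delicate point'' but an unfillable gap: under the set-of-distinct-closures reading the lemma is false. Take $X=\oo$ with the co-countable topology: the identity map witnesses that $X$ is \wreso{\oo}, yet for every point-countable $\mc D\subs \mc P(X)$ each point lies in only countably many distinct closures of members of $\mc D$ (such a closure is either $X$ itself or a countable $D\in\mc D$ containing the point, and point-countability of $\mc D$ allows only countably many of the latter). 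In that same space no two non-empty open sets are disjoint, so the clustering configuration $\langle I_n\rangle,x^\ast$ you hope to extract cannot exist; no refinement of that plan can succeed.

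The reading the paper actually uses --- both in its own proof and when it verifies the hypothesis for $X(\mc F)$ in Theorem \ref{tm:example-of-size-ww2} --- is that $\clo{\mc D}$ is point-countable \emph{as a family indexed by $\mc D$}: for every $x$, the set $\{D\in\mc D:x\in\overline D\}$ is countable. With that reading your opening two paragraphs already finish the proof, and the detour through stabilization, passing to $\overline Y$, and cardinality estimates is unnecessary: each $D_\alpha$ is non-empty (being dense in $Y\ne\empt$), so $\ran(f)$ is unbounded in $\oo$, hence the sets $D_\gamma$ for $\gamma\in\ran(f)$ are uncountably many pairwise distinct members of $\mc D$, and any fixed $y_0\in Y$ lies in $\overline{D_\gamma}$ for all of them --- contradicting point-countability of $\clo{\mc D}$. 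This is precisely the paper's argument, stated there for the tails $D_\xi=\bigcup\{Y_\zeta:\xi<\zeta\}$ of an arbitrary partition $\{Y_\zeta:\zeta<\oo\}$ of $Y$.
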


\begin{proof}
Assume that  $\{Y_{\zeta}:{\zeta}<\oo\}$ is a partition of $Y$.
Let $D_{\xi}=\bigcup\{Y_{\zeta}:{\xi}<{\zeta}\}$ for ${\xi}<\oo$.
Then the family $\mc D=\{D_{\xi}:{\xi}<\oo\}$ is point-countable.
So $\clo{\mc D}$ is also point-countable. So ${D_{\xi}}$ is not dense in $Y$ 
for all but countably many ${\xi}$. So the partition 
$\{Y_{\zeta}:{\zeta}<\oo\}$ does not witness that $Y$ 
is \wreso{\oo}.
\end{proof}

To prove Theorems \ref{tm:example-of-size-measurable} and \ref{tm:example-of-size-ww}
we should recall some definitions and results from 
\cite{JuSoSz} and \cite{JuMa}.

\begin{definition}[{\cite[Definition 3.1]{JuSoSz}}]
Let ${\kappa}$ be an infinite cardinal, and let $\mc F$ be a filter on
${\kappa}$. Let $T$ be the tree ${\kappa}^{<{\omega}}$. 
A topology $\tau_{\mc F}$ is defined on $T$ by
\begin{displaymath}
 \tau_{\mc F}=\big\{V\subs T: 
 \forall t\in V\{{\alpha}\in {\kappa}:t^\frown {\alpha}\in V  \}\in \mc F\big\},
\end{displaymath}
and the space $\<T,\tau_{\mc F}\>$ is denoted by $X(\mc F)$.
\end{definition}

\begin{proof}[Proof of Theorem \ref{tm:example-of-size-measurable}]
Let $\mc U$ be a ${\kappa}$-complete non-principal ultrafilter on ${\kappa}$.

The space $X=X(\mc U)$ is monotonically normal by \cite[Theorem 3.1]{JuSoSz}.

An ultrafilter $\mc U$  is {\em ${\lambda}$-descendingly complete} 
if $\bigcap\{U_{\zeta}:{\zeta}<{\lambda}\}\ne \empt$ for each decreasing
sequence $\{U_{\zeta}:{\zeta}<{\lambda}\}\subs \mc U$.

A  $\sigma$-complete ultrafilter is clearly 
${\omega}$-descendingly-complete. 
In the proof of \cite[Theorem 3.5]{JuSoSz}
the authors prove  Lemma 3.6 which claims that  
$\clo {\mc D}$
is point-countable for each point-countable family $\mc D\subs \mc P(X(\mc F))$
provided that $\mc F$ is a ${\omega}$-descendingly complete ultrafilter.
So $\clo {\mc D}$
is point-countable for each point-countable family $\mc D\subs \mc P(X)$,
and so $X$ is  not \wreso{\oo} by Lemma \ref{lm:clo-point-countable}.
\end{proof}

Instead of Theorem \ref{tm:example-of-size-ww}
we prove the following theorem which is a  slight improvement of
\cite[Theorem 5]{JuMa}.

\begin{theorem}\label{tm:example-of-size-ww2}
If it is consistent that there is a measurable  cardinal, 
then it is also consistent that  there is an 
${\omega}$-resolvable monotonically normal space $X$ with $|X|=\Delta(X)={\omega}_{\omega}$
such that if a family $\mc D\subs \mc P(X)$ is point-countable,
then the family  $\clo{\mc D}= \{\overline{D}:D\in \mc D\}$
is also point countable. Hence $X$ does not contain 
any \wreso{{{\omega}_1}} subspace.
\end{theorem}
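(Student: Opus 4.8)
\textbf{Proof proposal for Theorem \ref{tm:example-of-size-ww2}.}
The plan is to build the space $X$ from a carefully chosen filter by means of the $X(\mc F)$ construction, so that all the structural facts already recorded — monotone normality, point-countability of closures, the formula $\clo{\mc D}$ is point-countable — follow verbatim from the results of \cite{JuSoSz}. Concretely, start from a measurable cardinal ${\kappa}$ with a ${\kappa}$-complete non-principal ultrafilter $\mc U$ on ${\kappa}$. By a L\'evy-collapse-style or Prikry-type forcing (this is the ingredient borrowed and slightly strengthened from \cite[Theorem 5]{JuMa}) pass to a model in which ${\kappa}$ has been turned into $\aleph_\omega$ while a suitable projected filter $\mc F$ on $\aleph_\omega$ survives and retains the one combinatorial property we actually need: $\mc F$ should be an ${\omega}$-descendingly complete filter (equivalently, every countable decreasing sequence in $\mc F$ has nonempty intersection), and, to get ${\omega}$-resolvability, $\mc F$ should not be ${\omega}_1$-complete — indeed it should contain a partition-like refinement giving the $\omega$-many dense pieces. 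Then set $X = X(\mc F)$ on the tree $(\aleph_\omega)^{<{\omega}}$.

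The steps, in order, would be: (1) quote \cite[Theorem 3.1]{JuSoSz} to get that $X(\mc F)$ is monotonically normal whenever $\mc F$ is a filter; (2) check $|X|=\Delta(X)={\omega}_{\omega}$: the underlying set is $(\aleph_\omega)^{<{\omega}}$, which has size $\aleph_\omega$, and every nonempty basic open set around a node $t$ contains $\{t^\frown{\alpha}:{\alpha}\in F\}$ for some $F\in\mc F$, so it has size $\ge\aleph_\omega$ provided $\mc F$ has no member of size $<\aleph_\omega$ — this is where I must make sure the forcing keeps $\mc F$ ``uniform'' on ${\omega}_\omega$; (3) exhibit the ${\omega}$-resolving partition of $X(\mc F)$: if $\{A_n:n<{\omega}\}$ witnesses that $\mc F$ fails to be ${\omega}_1$-complete (so each $A_n\in\mc F$ but $\bigcap_n A_n\notin\mc F$, or a genuine partition of a set in $\mc F$ into $\omega$ pieces meeting $\mc F$ positively), push this down the tree levelwise to split $T$ into ${\omega}$ dense sets — this is the part of \cite[Theorem 5]{JuMa} one reuses almost unchanged; (4) invoke Lemma 3.6 from (the proof of) \cite[Theorem 3.5]{JuSoSz}: since $\mc F$ is an ${\omega}$-descendingly complete filter, $\clo{\mc D}$ is point-countable for every point-countable $\mc D\subs\mc P(X(\mc F))$; (5) conclude via Lemma \ref{lm:clo-point-countable} that $X$ contains no \wreso{\oo} subspace, which in particular gives Theorem \ref{tm:example-of-size-ww} since a \wreso{{{\omega}_1}} space is certainly a \wreso{\oo} subspace of itself.

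The genuinely delicate step is (the forcing behind) item (3) combined with keeping items (2) and (4) simultaneously true: one needs a single model of ZFC in which there is a filter $\mc F$ on ${\omega}_\omega$ that is at once (a) uniform / fine enough that basic open sets have full size $\aleph_\omega$, (b) ${\omega}$-descendingly complete so that Lemma 3.6 of \cite{JuSoSz} applies, yet (c) \emph{not} ${\omega}_1$-complete, so that the $\omega$-fold splitting exists and $X(\mc F)$ is ${\omega}$-resolvable. These three demands pull in opposite directions — (b) is a strong completeness-type property while (c) is an anti-completeness property — and reconciling them is exactly what forces us through a measurable cardinal: the ${\kappa}$-completeness of $\mc U$ upstairs is what survives (in the weakened ``${\omega}$-descendingly complete'' form) after collapsing ${\kappa}$ to $\aleph_\omega$, while the collapse itself is what destroys $\sigma$-completeness and manufactures the ${\omega}$-many dense pieces. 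I expect the bulk of the real work to be verifying that the chosen collapsing forcing preserves ${\omega}$-descending completeness of the projected filter; once that is in hand, everything else is a citation to \cite{JuSoSz}, \cite{JuMa}, and Lemmas \ref{lm:clo-point-countable} and \ref{lm:open-subspace} above.
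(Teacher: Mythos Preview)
Your overall plan—build $X=X(\mc F)$ on the tree $(\aleph_\omega)^{<\omega}$ for a suitable $\mc F$ and read off monotone normality, $|X|=\Delta(X)=\aleph_\omega$, $\omega$-resolvability, and the point-countable-closure property from known lemmas—is exactly the paper's strategy. But your step (4) has a genuine gap, and the paper's execution of that step differs from yours in an essential way.

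The gap: Lemma 3.6 of \cite{JuSoSz}, as quoted in this paper (see the proof of Theorem \ref{tm:example-of-size-measurable}), requires $\mc F$ to be an $\omega$-descendingly complete \emph{ultrafilter}, not merely a filter; the ultrafilter hypothesis is what drives the recursive description of closures in $X(\mc F)$. You write ``filter'' throughout, and the object you describe—a projection of $\mc U$ through a collapse, engineered to fail $\sigma$-completeness—need not be ultra. So step (4) as written does not go through.

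The paper does not use Lemma 3.6 here. Instead it invokes the ``two models situation'' of \cite{JuMa}: Woodin's model $V_1$ (in which $\aleph_\omega$ carries an $\omega_1$-descendingly complete uniform ultrafilter) sits inside a $V_2$ containing a countable $A\subs\aleph_\omega$ not covered by any set of size $<\aleph_\omega$ from $V_1$, and one takes $\mc F=\mc G\cap V_1$ where $\mc G$ is the cofinite filter on $A$. This $\mc F$ is a non-ultra filter, and point-countability of $\clo{\mc D}$ comes from \cite[Lemma 4.2]{JuMa}: every $t\in\overline D$ satisfies $t^\frown s\in D$ for some finite sequence $s$ from the \emph{countable} set $A$, so if uncountably many closures contain $t$, a pigeonhole on these finitely many $s$'s puts some $t^\frown s$ into uncountably many $D$'s. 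This is a different mechanism from $\omega$-descending completeness.

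Your route is salvageable, and more simply than you think: take $\mc F$ to be Woodin's $\omega_1$-descendingly complete uniform ultrafilter on $\aleph_\omega$ itself, with no further projection. Then $\mc F$ \emph{is} an ultrafilter, Lemma 3.6 applies verbatim, and your step (3) worries dissolve—$\omega$-resolvability of $X(\mc F)$ needs no special property of $\mc F$ whatsoever: partition $T$ as $D_k=\{t:|t|\in S_k\}$ for any partition of $\omega$ into infinite sets $S_k$; each $D_k$ is dense because every nonempty open set contains nodes of arbitrarily large length. So the ``delicate tension'' you identify between (b) and (c) is not where the work lies; the only hard ingredient is Woodin's construction, which you should cite rather than re-derive.
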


\begin{proof}
In  \cite[page 665]{JuMa} the authors
write that ''{\em starting from
one measurable, Woodin (\cite{W}) constructed a model in which $\aleph_{\omega}$ carries an  $\oo$-
descendingly complete uniform ultrafilter. Woodin’s model $V_1$ can be embedded
into a bigger ZFC model $V_2$ so that the pair of models $(V1,V_2)$ with 
${\kappa} = \aleph_{\omega}$ 
satisfies the two models situation}'', i.e.

\begin{enumerate}[(1)]
 \item $\oo^{V_1}=\oo^{V_2}$,
\item there is a countable subset $A$ of 
${\omega}_{\omega}$  in $V_2$ such that no $B \in  V_1$ of
cardinality $<{\omega}_{\omega}$ covers $A$;
\item 
for the filter $\mc G$ on ${\omega}_{\omega}$ defined in 
$V_2$ by $B \in \mc G$ iff $A − B$ is finite, we have
$\mc G \cap V_1 \in V_1$. 
\end{enumerate}
(the ``two model situation'' is defined in   \cite[Theorem 4.5]{JuMa}).

\medskip
Let $\mc F=\mc G\cap V_1$ and consider the space $X=X(\mc F)$.
As it was observed in \cite{JuSoSz}, spaces obtained as  $X(\mc H)$ 
from some filter $\mc H$
are monotonically normal and ${\omega}$-resolvable.

In \cite[Theorem 4.1]{JuMa}
Juhász and Magidor showed that the space $X(\mc F)$ is actually
hereditarily ${\omega}_1$-irresolvable. They proved the following lemma:

\medskip 
 
\noindent{\bf Lemma 4.2 from \cite{JuMa}.}
 For any $D \subs X(F)$ and $t {\in} \overline{D}$ there is a finite sequence $s$ of
members of $A$ such that $t^\frown s {\in} D$.

\medskip

Using this lemma we show that  $\clo {\mc D}$
is point-countable for each point-countable family $\mc D\subs \mc P(X)$,
and so  $X$ is not \wreso{\oo} by Lemma \ref{lm:clo-point-countable}.

Indeed,  let $\mc D\subs \mc P(X)$ be an uncountable   family such that 
$t\in \bigcap_{D\in \mc D}\overline{D}$.
Then, by \cite[Lemma 4.3]{JuMa}, for each $D\in \mc D$ we can pick 
a finite sequence $s_D$ of
members of $A$ such that $t^\frown s_D {\in} D$.
Since there are only countable many finite sequences of elements of
$A$ there is $s$ such that 
$s_D=s$ for uncountably many $D\in \mc D$.
Then $t^\frown s$ is in uncountably many elements of $\mc D$,
so $\mc D$ is not point-countable.

So we proved that no subspace of $X$ is  \wreso{\oo}.
\end{proof}

\section{${\omega}$-resolvability after adding a single Cohen reals}

Before proving Theorem \ref{tm:w-not-enough}
we need some preparation. 

The notion of almost resolvability was introduced by 
Bolstein (\cite{Bo}) in 1973:  a topological space is {\em almost-resolvable} if
it is a countable union of sets with empty interiors.
The notion of \wresob{{\omega}} was first considered in \cite{TaVi}
under the name {almost-${\omega}$-resolvability}.

Clearly  almost ${\omega}$-resolvable (i.e. \wreso{{\omega}}) spaces are almost resolvable.

\begin{lemma}\label{lm:cohen-approx}
Let $X$ be a crowded topological space.\\
(1) If $X$ is \wreso{{\omega}}, then $X$ is ${\omega}$-resolvable in 
$V^{\Fn({\omega},2)}$.\\
(2)  If $X$ is resolvable in $V^{\Fn({\omega},2)}$, then 
$X$ is almost-resolvable.
\end{lemma}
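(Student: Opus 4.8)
\textbf{Plan for Lemma \ref{lm:cohen-approx}.}

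For part (1), suppose $f:X\to\omega$ witnesses that $X$ is \wreso{\omega}, so that $D_n=\{x:f(x)\ge n\}$ is dense in $X$ for every $n<\omega$. The plan is to use a single Cohen real to ``thin out'' each level into infinitely many dense pieces. Force with $\Fn(\omega,2)$, or equivalently with $\Fn(\omega,\omega)$, and let $c:\omega\to\omega$ be the generic function. Define $g:X\to\omega$ in $V^{\Fn(\omega,\omega)}$ by $g(x)=c(f(x))$. I claim $g$ witnesses $\omega$-resolvability. Fix $k<\omega$ and a nonempty open $U\subs X$; I must show some $p$ in the generic filter and some $x\in U$ satisfy $p\Vdash \dot g(\check x)=\check k$. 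Given an arbitrary condition $q\in\Fn(\omega,\omega)$, choose $n<\omega$ with $n\notin\dom(q)$; since $D_n$ is dense, pick $x\in U\cap D_n$, so $f(x)\ge n$, and in fact I should choose $x\in U$ with $f(x)=m$ for some particular $m\ge n$ with $m\notin\dom(q)$ — this requires knowing that the value $f(x)$ can be taken outside a given finite set while keeping $x\in U$, which follows because $\bigcup_{n}D_n$-type arguments show $\{x\in U: f(x)=m\}$ must be dense for a cofinal set of $m$ (otherwise $\bigcup_{m\le M}f^{-1}(m)$ would contain an open subset of $U$, contradicting density of $D_{M+1}$). Then $p=q\cup\{\langle m,k\rangle\}$ extends $q$ and forces $\dot g(\check x)=c(m)=k$. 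By genericity we are done. The mild subtlety — making sure $f^{-1}(m)\cap U$ is dense in $U$ for suitable arbitrarily large $m$ — is the only thing to check carefully; it is exactly the observation recorded after the definition of \wreso{\kappa} (intersecting the partition with $U$).

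For part (2), suppose $X$ is resolvable in $V^{\Fn(\omega,2)}$; I want to conclude $X$ is almost-resolvable in $V$, i.e. $X=\bigcup_{n<\omega}E_n$ with each $E_n$ having empty interior in $V$. Let $\dot A$ be a $\Fn(\omega,2)$-name for a dense subset of $X$ whose complement $X\setminus\dot A$ is also dense. Since $\Fn(\omega,2)$ is countable, enumerate its conditions as $\{p_n:n<\omega\}$, and for each $x\in X$ set
\begin{displaymath}
 E_n=\{x\in X: p_n\Vdash \check x\in X\setminus\dot A\}.
\end{displaymath}
Every $x\in X$ lies in some $E_n$: in $V^{\Fn(\omega,2)}$ either $x\in\dot A$ or $x\notin\dot A$, but by homogeneity (or just because a name decides membership of a ground-model point along some condition) there is always a condition forcing $\check x\notin\dot A$ — indeed if no condition forced $\check x\notin\dot A$ then $\Vdash\check x\in\dot A$, and this cannot hold for all $x$ since $X\setminus\dot A$ is forced to be nonempty; more carefully, I should instead split into the two names $\dot A$ and its complement and note that for each $x$ some condition decides $\check x\in\dot A$ one way, giving a countable cover by the sets $\{x:p_n\Vdash\check x\in\dot A\}$ and $\{x:p_n\Vdash\check x\notin\dot A\}$. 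Now the key point: each of these sets has empty interior in $V$. Suppose $p_n\Vdash\check x\in X\setminus\dot A$ for all $x$ in some nonempty open $U$; then $p_n\Vdash \check U\cap\dot A=\empt$, contradicting $p_n\Vdash\dot A\subs^{dense}X$. Symmetrically for the sets forcing $\check x\in\dot A$, using density of $X\setminus\dot A$. Hence $X$ is a countable union of sets with empty interior, i.e. almost-resolvable.

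\textbf{Main obstacle.} The essential content is small in both parts; the only real care needed is bookkeeping about which condition decides membership of a fixed ground-model point in the name for the dense set, and (in part (1)) the density of individual fibers $f^{-1}(m)$ restricted to an open set. I expect part (2)'s cover argument — making sure \emph{every} $x$ is captured, not just producing sets with empty interior — to be the step most prone to a gap, so I would phrase it via the two complementary name-membership relations rather than trying to force $\check x\notin\dot A$ directly.
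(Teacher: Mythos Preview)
Your proposal is correct and matches the paper's proof in both parts: in (1) compose $f$ with the generic $c:\omega\to\omega$, and in (2) cover $X$ by the countably many sets $\{x:p\Vdash\check x\in\dot D_i\}$ for $p\in\Fn(\omega,2)$ and $i<2$, each with empty interior by density of $\dot D_{1-i}$. One simplification for (1): the ``mild subtlety'' about density of individual fibers is unnecessary --- given a condition $q$, just take $n=\max\dom(q)+1$ and pick any $x\in U\cap D_n$; then $f(x)\ge n>\max\dom(q)$ forces $f(x)\notin\dom(q)$ outright, so $q\cup\{\langle f(x),k\rangle\}$ is a condition, exactly as the paper does.
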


\begin{proof}[Proof of Lemma \ref{lm:cohen-approx}]
(1) Assume that the function $f:X\to {\omega}$ witnesses
the \wresob{{\omega}} of $X$.

If  $\mc G$ is the  $V$-generic filter in $\Fn({\omega},{\omega})$, and
$g=\bigcup \mc G$, then the function $h=g\circ f$ witnesses
that $X$ is ${\omega}$-resolvable.

We need to show that 
$\{y\in X: (g\circ f)(y)=n\}$ is dense in $X$

Indeed, let $p\in \Fn({\omega},{\omega})$, $\empt\ne U\in \tau_X$.
Since $f:X\to {\omega}$ witnesses
the \wresob{{\omega}} of $X$ 
there is $y\in U$ such that 
\begin{displaymath}
f(y)> \max \dom(p). 
\end{displaymath}
Let 
\begin{displaymath}
  q= p\cup\{\<f(y),n\>\}.
\end{displaymath}
Then $q\le p$ and 
\begin{displaymath}
 g\Vdash (g\circ f)(y)=n.
\end{displaymath}

So we proved that $g\circ f$ witnesses that $X$ is ${\omega}$-resolvable in 
the generic extension.

\medskip
\noindent (2) 
Assume 
\begin{displaymath}
 V^{\Fn({\omega},2)}\models\text{``$X$ has a partition $\{D_0,D_1\}$ into dense subsets.''}
\end{displaymath}
For all $p\in \Fn({\omega},2)$ and $i<2$
let
\begin{displaymath}
 D^p_{i}=\{x\in X: p\Vdash x\in \dot{D_i}\}.
\end{displaymath}
Then $X=\bigcup\{D^p_i:p\in \Fn({\omega},2), i<2\}$,
and we claim that $\inte D^p_i=\empt$ for each $p\in \Fn({\omega},2)$, 
and $i<2$.

Indeed,   fix $p$ and $i$ and let $U$ be an arbitrary  non-empty open subset. 
Then $p\Vdash U\cap \dot{D_{1-i}}\ne \empt$, so there is $q\le p$
and $y\in U$ such that  $q\Vdash y\in \dot{D_{1-i}}$.
Then $q\Vdash y\notin \dot{D_{i}}$, so 
$p\not\Vdash y\in \dot{D_{i}}$, and so $y\notin D^p_i$.
Thus $U\not\subs D^p_i$.   Since $U$ was arbitrary, we proved
$\inte D^p_i=\empt$.
\end{proof}

After this preparation we can prove Theorem \ref{tm:w-not-enough}.

\begin{proof}[Proof of Theorem \ref{tm:w-not-enough}]
(1)
 Kunen \cite{Ku} proved that it 
is consistent, modulo a measurable cardinal,
that there is a maximal independent family 
$\mc A \subs  \mc P(\oo)$
which is also $\sigma$-independent.

In  \cite[Theorems 3.1 and 3.2]{KuSzyTa} the authors proved that 
if there is a maximal independent family
$\mc A \subs  \mc P(\oo)$
which is also $\sigma$-independent,
then  there is a Baire space $X$
 with $|X|=\Delta(X)=\oo$ such that every open subspace of $X$
  is irresolvable, i.e. the  space  $X$ is {\em OHI}. 

It is well-known that a crowded  OHI Baire space $X$ is not almost resolvable:
if $X=\bigcup_{n\in {\omega}}X_n$, then $\inte X_n\ne \empt$
for some $n\in {\omega}$.  

Indeed, if $\inte X_n=\empt$, then $X\setminus X_n$ is dense, so 
$U_n=\inte (X\setminus X_n)$ is dense in $X$ because 
every open subset of $X$ is irresolvable.
Thus $\bigcap_{n\in {\omega}}U_n\ne\empt$ because $X$ is Baire.
However 
\begin{displaymath}
\bigcap_{n\in {\omega}}U_n\subs \bigcap_{n\in {\omega}}(X\setm X_n)=
X\setminus \bigcup_{n\in {\omega}}X_n=\empt,
\end{displaymath}
which is a contradiction.

Thus $X$ is not almost resolvable, so it is not 
${\omega}$-resolvable in the model $V^{\Fn({\omega},2)}$
by Lemma \ref{lm:cohen-approx}(2).
\medskip

\noindent (2)
In \cite{KuTa} the authors  proved that if $V=L$, then there are no
crowded Baire irresolvable spaces. 
Hence, by \cite{TaVi}, if $V=L$, 
 then every crowded  space $X$ 
is almost-${\omega}$-resolvable (i.e. \wreso{{\omega}}).

So these spaces are   ${\omega}$-resolvable in the model $V^{\Fn({\omega},2)}$
by Lemma \ref{lm:cohen-approx}(1).
\end{proof}

\begin{proof}[Proof of Theorem \ref{tm:w-not-enough}(3)]
Let $X$ be a crowded c.c.c space.

We can assume that $|X|=|\Delta(X)$.

By induction we define a strictly decreasing sequence of cardinals:
\begin{displaymath}
 {\kappa}_0, {\kappa}_1,\dots,{\kappa}_n\dots
\end{displaymath}
as follows.
\begin{enumerate}[(i)]
 \item ${\kappa}_0=\Delta(X)$,
 \item if ${\kappa}_i$ is singular, then ${\kappa}_{i+1}=\cf({\kappa}_i)$,
 \item if ${\kappa}_i>{\omega}$ is regular, then ${\kappa}_i={\lambda}^+$
 (because $|X|$ is below the first weakly inaccessible cardinal,) and let
 ${\kappa}_{i+1}={\lambda}$,
 \item if ${\kappa}_i={\omega}$ or ${\kappa}_i={\omega}_1$, then we stop.
\end{enumerate}
Assume that the construction stopped in the $n$th step.

Then we can prove, by finite induction, then 
$X$ is \wreso{{\kappa}_i} for all $i\le n$ by theorem  \ref{tm:stepping-down}.
Thus $X$ is \wreso{{\omega}}  or \wreso{{\omega_1}}, and so either $X$
is ${\omega}$-resolvable in $V^{\Fn({\omega},2)}$    by 
by Lemma \ref{lm:cohen-approx}(1), or 
$X$
is ${\omega}_1$-resolvable in $V^{\Fn({\omega}_1,2)}$    by 
Thereon \ref{tm:kappa-res-in-ccc}.
\end{proof}

\begin{problem}[{\cite[Questions 5.2.]{TaVi}}]
Are almost resolvability and almost-${\omega}$-resolvability equivalent in the class
of irresolvable spaces? 
\end{problem}

\begin{problem}
Is there, in ZFC,  a crowded topological space $X$ which is irresolvable 
in the Cohen generic extension $V^{\Fn({\omega},2)}$.
\end{problem}

\end{document}